\newtheorem{theorem}{Theorem}
\newtheorem{proposition}[theorem]{Proposition}
\newtheorem{lemma}[theorem]{Lemma}
\newtheorem{corollary}[theorem]{Corollary}
\newtheorem*{theoreme}{Theorem}
\newtheorem*{remark}{Remark}
\newtheorem*{notation}{Notation}
\newcommand{\R}{\mathbf{R}}
\newcommand{\BMS}{m_{\mathrm{BMS}}}
\newcommand{\BR}{\mathrm{BR}}
\newcommand{\Hr}{\mathbf{H}}
\title{The case of equality in the dichotomy of Mohammadi-Oh}
\author{Laurent Dufloux}
\begin{document}
\maketitle
\abstract{If $n \geq 3$ and $\Gamma$ is a convex-cocompact Zariski-dense discrete subgroup of $\mathbf{SO}^o(1,n+1)$ such that $\delta_\Gamma=n-m$ where $m$ is an integer, $1 \leq m \leq n-1$, we show that for any $m$-dimensional subgroup $U$ in the horospheric group $N$, the Burger-Roblin measure associated to $\Gamma$ on the quotient of the frame bundle is $U$-recurrent.}

\section{Introduction}
\subsection{Notations}
We fix once and for all an integer $n \geq 2$. Let $G=\mathbf{SO}^o(1,n+1)$, this is the group of direct isometries of the real $(n+1)$-dimensional hyperbolic space $\Hr^{n+1}$. Its acts conformally on the boundary $\partial \Hr^{n+1}$.

Recall the Busemann function
\[ b_\xi(x,y) = \lim_{t \to \infty} d(x,\xi_t)-d(y,\xi_t) \qquad \xi \in \partial \Hr^{n+1},\quad x,y \in \Hr^{n+1} \]
where $t \mapsto \xi_t$ is some geodesic with positive endpoint $\xi$.

Fix an Iwasawa decomposition $G=KAN$; recall that the maximal compact subgroup $K$ is isomorphic to $\mathbf{SO}(n+1)$, whereas the Cartan subgroup $A$ is isomorphic to $\R$ (since $G$ has rank $1$) and the maximal unipotent subgroup $N$ is isomorphic to $\R^n$.

Denote by $M$ the centralizer of $A$ in $K$; $M$ is isomorphic to $\mathbf{SO}(n)$. Recall that $M$ normalizes $N$ and there are isomorphisms $M \simeq \mathbf{SO}(n)$, $N \simeq \R^n$ such that the operation of $M$ on $N$ by conjugation identifies with the natural operation of $\mathbf{SO}(n)$ on $\R^n$.

We will always tacitly endow $N$ with the corresponding Euclidean metric.

Let $\Gamma$ be a discrete non-elementary subgroup of $G$. Throughout this paper we make the standing assumptions that

\begin{center} {\bf $\Gamma$ is Zariski-dense and has finite Bowen-Margulis-Sullivan measure.}
\end{center}
In fact except in the  last paragraph we will always assume that $\Gamma$ is convex-cocompact (this is stronger than finiteness of the Bowen-Margulis-Sullivan measure).

As usual, we denote by $\delta_\Gamma$ the growth exponent (also called Poincar\'{e} exponent) of $\Gamma$
\[ \delta_\Gamma = \limsup_{R \to \infty} \frac{\log \mathrm{Card} \{ \gamma \in \Gamma\ ;\ d(x,\gamma x) \leq R \} }{R} \]
which does not depend on the fixed point $x \in \Hr^{n+1}$. This is the Hausdorff dimension (with respect to the spherical metric on the boundary) of the limit set
\[ \Lambda_\Gamma = \overline{\Gamma \cdot x} \cap \partial \Hr^{n+1} \]
(which also does not depend on $x$). Bear in mind that $0 < \delta_\Gamma \leq n$; in this paper we will be interested in the case when $\delta_\Gamma$ is an integer strictly less than $n$.

The boundary $\partial \Hr^{n+1}$ is endowed with the Patterson-Sullivan density $(\mu_x)_{x \in \Hr^{n+1}}$. This is the (essentially unique since $\Gamma$ has finite Bowen-Margulis-Sullivan measure) family of finite Borel measures on $\partial \Hr^{n+1}$ satisfying
\begin{enumerate}
\item $\Gamma$-equivariance : $\mu_{\gamma x}$ is the push-forward of $\mu_x$ through the mapping induced by $\gamma$ on $\partial \Hr^{n+1}$;
\item $\delta_\Gamma$-conformality: for any $x,y \in \Hr^{n+1}$, $\mu_x$ and $\mu_y$ are equivalent and the Radon-Nikodym cocycle is given by
  \[ \frac{\mathrm{d} \mu_y}{\mathrm{d} \mu_x} (\xi) = e^{-\delta_\Gamma b_\xi(y,x)} \]
  almost everywhere.
\end{enumerate}
This is the Patterson-Sullivan density associated to $\Gamma$. If a base point $o \in \Hr^{n+1}$ is fixed, the boundary $\partial \Hr^{n+1}$ may be identified canonically with the $n$-sphere $S^{n}$ and thus endowed with the usual spherical metric. When $\Gamma$ is convex-cocompact, $\mu_o$ is proportional to  the $\delta_\Gamma$-dimensional Hausdorff measure on $\delta_\Gamma$ with respect to the spherical metric (see \cite{Sullivan} or \cite{Coornaert}).

We now recall the definition of the Bowen-Margulis-Sullivan (BMS) measure -- first on the unit tangent bundle, then on the frame bundle. Let $T^1 \Hr^{n+1}$ be the unit tangent bundle over $\Hr^{n+1}$. The Hopf isomorphism is the bijective mapping from $T^1 \Hr^{n+1}$ to $\partial^2 \Hr^{n+1} \times \R$ that maps the unit tangent vector $u$ with base point $x$ to the triple
\[ (\xi,\eta,s)= (u^-,u^+, b_{u^-}(x,o)) \]
where $u^-, u^+$ respectively are the negative and positive endpoints of the geodesic whose derivative at $t=0$ is $u$. The notation $\partial^2 \Hr^{n+1}$ stands for the set of all $(\xi,\eta) \in \partial \Hr^{n+1} \times \partial \Hr^{n+1}$ such that $\xi \neq \eta$.

In these coordinates, the BMS measure on $T^1 \Hr^{n+1}$ is given by
\[ \mathrm{d} \tilde{m}_{\mathrm{BMS}}(u) = e^{\delta_\Gamma (b_\xi(x,u)+b_\eta(x,u))} \mathrm{d} \mu_x (\xi) \mathrm{d} \mu_x (\eta) \mathrm{d}s \]
(it does not depend on the choice of $x \in \Hr^{n+1}$).

The BMS measure is a Radon measure  that is invariant under the geodesic flow as well as under the natural operation of $\Gamma$. The quotient of this measure with respect to $\Gamma$ is a Radon measure $\BMS$ on $\Gamma \backslash T^1\Hr^{n+1}$ that is still invariant with respect to the geodesic flow. This quotient measure may be finite or infinite; we will always assume that is is finite and in fact we will usually assume that it is compactly supported, which is equivalent to $\Gamma$ being convex-cocompact (\cite{NegCurv}, \cite{Sullivan}).

The Burger-Roblin (BR) measure is defined in a similar fashion:
\[ \mathrm{d} \tilde{m}_{\mathrm{BR}}(u) = e^{\delta_\Gamma b_\xi(x,u)+nb_\eta(x,u)} \mathrm{d}\mu_x(\xi) \mathrm{d} \nu_x(\eta) \mathrm{d} s\]
where $\nu_x$ is the unique  Borel probability measure on $\partial \Hr^{n+1}$ that is invariant under the stabilizer of $x$ in $G$; if $\partial \Hr^{n+1}$ is identified with $S^n$ accordingly, this is just the Lebesgue measure on $S^n$.

Likewise, the Burger-Roblin measure is $\Gamma$-invariant and thus defines a Radon measure on $\Gamma \backslash T^1 \Hr^{n+1}$. This Radon measure is always infinite, unless $\Gamma$ is a lattice.

Both these measures lift to the frame bundle over $\Gamma \backslash \Hr^{n+1}$ in the following way. The hyperbolic space $\Hr^{n+1}$ identifies with the quotient space $G/M$ so that $G$ identifies with the $(n+1)$-frame bundle over $\Hr^{n+1}$. The quotient space $\Gamma \backslash G$ accordingly identifies with the $(n+1)$-frame bundle over the orbifold $\Gamma \backslash \Hr^{n+1}$. There is a unique measure on $\Gamma \backslash G$ that is (right) invariant with respect to $M$ and  projects onto the BMS measure in $\Gamma \backslash G/M$, we denote it by $\BMS$ as well. Same thing for the BR measure. The lift of the geodesic flow to $\Gamma \backslash G$ is called the frame flow.

The point in doing this is we can now let $N$ act by translation (to the right) on $\Gamma \backslash G$. Let us agree that  $A = \{ a_t\ ;\ t \in \R \}$ where $(a_t)_t$ parametrizes the frame flow over $\Hr^{n+1}$, in such a way that $N$ parametrizes the \emph{unstable} horospheres. 

We then have, for every $h \in N$,
\begin{equation} \label{form.equivariance} a_{-t}h a_t=S_t(h)
\end{equation}
where $S_t$ is the homothety $N \to N$ with ratio $e^t$.

We summarize the important points in the following
\begin{lemma}
  Assume that $\Gamma$ has finite BMS measure and is Zariski-dense.
  \begin{enumerate}
  \item The BMS measure on $\Gamma \backslash G$ is mixing with respect to the ergodic flow.
  \item The BR measure on $\Gamma \backslash G$ is invariant and ergodic with respect to $N$.
    \item If $\Omega \subset \Gamma \backslash G$ has full BMS measure, then $\Omega N$ has full BR measure.
  \end{enumerate}
\end{lemma}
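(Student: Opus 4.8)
The three assertions are of quite different natures, and I would treat them separately: (1) and (2) are by now standard structural facts for which I would appeal to the literature, identifying the precise point at which Zariski-density enters, whereas (3) admits a short self-contained proof by disintegration which I would write out in full.

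For (1) I would proceed in two stages. First, the geodesic flow on $\Gamma \backslash T^1 \Hr^{n+1} = \Gamma \backslash G/M$ is mixing with respect to $\BMS$: finiteness of $\BMS$ gives conservativity and ergodicity, and Babillot's criterion then reduces mixing to the non-arithmeticity of the length spectrum, which holds here because $\Gamma$ is Zariski-dense (equivalently non-elementary, in constant curvature). Second, I would promote this to mixing of the frame flow on $\Gamma \backslash G$ itself. The only obstruction would be a nontrivial finite-dimensional $M \simeq \mathbf{SO}(n)$-subrepresentation of $L^2(\BMS)$ producing an eigenfunction for the frame flow; Zariski-density forces the holonomy to fill out all of $M$ and rules this out. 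This is exactly the content of the frame-flow mixing theorems of Flaminio--Spatzier and of Winter, which I would cite. I expect this second stage to be the main obstacle, since it is where Zariski-density is used in an essential, non-formal way.

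For (2), $N$-invariance of $m_{\BR}$ is built into the definition. Writing $\Gamma \backslash G$ locally over the backward endpoint $u^- = \xi$, each fibre $F_\xi = \{u^- = \xi\}$ is parametrised by $N \times A$ in such a way that $N$ acts by (twisted) right translation in the $N$-coordinate while fixing the $A$-coordinate; the conditional of $m_{\BR}$ on $F_\xi$ is $(\text{Lebesgue on } N) \otimes \mathrm{d}s$ because $\nu_x$ is the $G$-invariant $n$-conformal density, and right translation preserves Haar measure on $N$. Ergodicity of $N$ on $(\Gamma \backslash G, m_{\BR})$ is the substantive point; I would deduce it from the mixing established in (1) by the classical equidistribution-of-expanding-horospheres argument (alternatively cite Roblin and Winter), noting that in the convex-cocompact case one even has unique ergodicity on the support.

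For (3) I would argue by disintegrating both measures along $u \mapsto u^-$. Both $\BMS$ and $m_{\BR}$ use the \emph{same} factor $\mu_x$ in the $\xi$-variable, so ``$\mu_x$-almost every fibre'' means the same thing for both. Put $E = (\Omega N)^c$; then $E$ is $N$-invariant and $E \subseteq \Omega^c$, whence $\BMS(E) = 0$. On each fibre $F_\xi \cong N \times A$ the conditional of $\BMS$ is $\beta_\xi^{\mathrm{PS}} \otimes \mathrm{d}s$ with $\beta_\xi^{\mathrm{PS}}$ a finite positive measure, while that of $m_{\BR}$ is $(\text{Lebesgue on } N) \otimes \mathrm{d}s$; and $N$-invariance forces $E \cap F_\xi = N \times T_\xi$ for some $T_\xi \subseteq A$. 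Both conditional masses of $E \cap F_\xi$ therefore vanish precisely when the Lebesgue measure of $T_\xi$ is zero. Since $\BMS(E) = 0$ gives this vanishing for $\mu_x$-almost every $\xi$, integrating the $m_{\BR}$-conditionals against $\mu_x$ yields $m_{\BR}(E) = 0$, i.e. $\Omega N$ has full $m_{\BR}$-measure. I would emphasize that this last step uses neither mixing nor ergodicity, only the common transversal structure of the two measures in the $u^-$-direction.
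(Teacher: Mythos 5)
Your proposal is correct and follows essentially the same route as the paper, which disposes of (1) and (2) by citing Winter's mixing and ergodicity results for the frame flow and of (3) by the remark that one should ``compare the definitions of BMS and BR measure, taking into account the fact that $N$ parametrizes the unstable horospheres.'' Your third paragraph is precisely the fleshed-out version of that hint: both measures disintegrate over the backward endpoint (and the Busemann parameter) with the same base measure class $\mu_x \otimes \mathrm{d}s$, and the $N$-invariance of $(\Omega N)^c$ reduces everything to a null set in the base.
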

\begin{proof}
For 1 and 2 see \cite{Winter}. For 3 compare the definitions of BMS and BR measure, taking into account the fact that $N$ parametrizes the unstable horospheres in the frame bundle.
\end{proof}

\subsection{Background}
The basic motivation for this paper is the following
\begin{theoreme}[Mohammadi-Oh, \cite{OhMohammadi}, Theorem 1.1]
Assume that $\Gamma$ is convex-cocompact and Zariski-dense. Let $m$ be an integer, $1 \leq m \leq n-1$, and $U$ be an $m$-plane in $N$. If $\delta_\Gamma > n-m$, then $m_\BR$ is $U$-ergodic.
\end{theoreme}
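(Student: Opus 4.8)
\noindent\emph{A proof strategy.} The plan is to reduce $U$-ergodicity to the $N$-ergodicity of $m_\BR$ provided by the Lemma (part~2): it suffices to show that every $U$-invariant $f\in L^\infty(m_\BR)$ is in fact $N$-invariant, and then $N$-ergodicity forces $f$ to be constant. This reduction rests on two separate ingredients, a conservativity (recurrence) statement for the $U$-action, and a Hopf-type propagation argument that upgrades invariance along $U$ to invariance along the whole unstable horosphere $N$. Both ingredients are governed by the same dimensional threshold, and it is precisely the strict inequality $\delta_\Gamma>n-m$ that makes them work; at equality one only recovers the weaker conservativity, which is the subject of the present paper.

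\noindent\emph{Conservativity.} First I would establish that the $U$-action is conservative. By the Hopf--Maharam criterion it is enough to show that for $m_\BR$-a.e. $\Gamma g$ one has $\int_U \mathbf 1_\Omega(\Gamma g\, u)\,\mathrm d u=\infty$, where $\Omega$ is a fixed compact neighbourhood of the (compact) support of the BMS measure. The mechanism is renormalisation through the commutation relation \eqref{form.equivariance}: for $m_\BR$-a.e. point the backward endpoint lies in $\Lambda_\Gamma$, hence (convex-cocompactness makes every limit point conical) the backward orbit $\Gamma g\, a_{-t}$ returns to a fixed compact set along a sequence $t_k\to\infty$. Writing $\Gamma g\, u=(\Gamma g\, a_{-t_k})\,S_{-t_k}(u)\,a_{t_k}$ and substituting $w=S_{-t_k}(u)$, so that $\mathrm d u=e^{m t_k}\,\mathrm d w$, the annulus $|u|\asymp e^{t_k}$ contributes $e^{m t_k}\,\mathrm{Leb}_U\{w: p_k\, w\, a_{t_k}\in\Omega\}$ with $p_k=\Gamma g\, a_{-t_k}$ bounded. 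On the boundary, $p_k\,w\,a_{t_k}\in\Omega$ detects $\Lambda_\Gamma$ at resolution $e^{-t_k}$ along the $m$-dimensional horospherical disc swept out by $w$; Ahlfors $\delta_\Gamma$-regularity of $\mu_o$ (Sullivan) gives $\mathrm{Leb}_n$ of the $r$-neighbourhood of $\Lambda_\Gamma$ equal to $\asymp r^{\,n-\delta_\Gamma}$, so that each term is of order
\[ e^{t_k(\delta_\Gamma-(n-m))}. \]
For $\delta_\Gamma>n-m$ this is a geometrically growing sequence and the return integral diverges robustly, which yields conservativity (the borderline $\delta_\Gamma=n-m$ gives terms of order $1$, barely divergent).

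\noindent\emph{Propagation $U\to N$.} The heart of the ergodicity statement is to show that a $U$-invariant $f$ is $N$-invariant. Here I would run a Hopf argument adapted to the conservative, infinite-measure $U$-action, using the Hopf ratio ergodic theorem together with the mixing of the BMS measure under the frame flow (Lemma, part~1). The point is that $a_{-t}U a_t=S_t(U)=U$, while the complementary $(n-m)$-dimensional directions of $N$ are expanded; conservativity sends local $U$-pieces back to a compact set where, after applying $a_t$, BMS-mixing makes their expanded images equidistribute transversally with respect to the Patterson--Sullivan measure. When $\delta_\Gamma>n-m$ the projection of $\mu_o$ onto any $(n-m)$-dimensional transverse complement is non-degenerate (full transverse dimension), so these expanded orbits sweep a positive transverse mass, letting the Hopf chains connect almost every pair of points on a common $N$-orbit. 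Combined with the right-$M$-invariance of $m_\BR$ (rotating $U$ through all $m$-planes, whose span is $N$), this forces $f$ to be invariant under all of $N$, and part~3 of the Lemma transfers the resulting a.e.\ statements between the BMS- and BR-full sets.

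\noindent I expect the propagation step to be the main obstacle. Quantifying the transverse equidistribution of $a_t$-expanded $U$-orbits, and pinning down that $\delta_\Gamma>n-m$ is exactly the threshold at which the transverse Patterson--Sullivan mass becomes positive (so the Hopf connectivity closes, whereas at equality it only just fails, leaving recurrence without ergodicity), is delicate; transferring the mixing of the \emph{finite} BMS measure into usable statements for the \emph{infinite} BR measure requires the product structure of $m_\BR$ and careful compact cutoffs, and the whole argument must be made uniform over every $m$-plane $U$ rather than for a generic direction.
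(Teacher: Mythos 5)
First, note that the paper does not prove this statement: it is quoted as background from Mohammadi--Oh (and Maucourant--Schapira), with only the remark that those authors rely on Marstrand's projection theorem to control the geometry of the BMS measure along $U$ and $N$. So your proposal can only be judged on its own terms, and as it stands it is an outline with two genuine gaps, both located exactly where the cited authors have to work hardest.

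The first gap is in the conservativity estimate. After renormalising by $a_{-t_k}$ you need a \emph{lower} bound on $\mathrm{Leb}_U\{w : p_k\, w\, a_{t_k}\in\Omega\}$, which is the $m$-dimensional measure of a slice of an $e^{-t_k}$-neighbourhood of the limit set \emph{in the fixed direction $U$}. You justify the order $e^{t_k(\delta_\Gamma-(n-m))}$ by quoting the $n$-dimensional Lebesgue measure of the $r$-neighbourhood of $\Lambda_\Gamma$, but Ahlfors regularity of $\mu_o$ controls that global quantity, not the measure of an $m$-dimensional slice in a prescribed direction: a $\delta_\Gamma$-regular set can meet a fixed $m$-plane (or its thin neighbourhood) in a set that is far too small, or be entirely concentrated near a proper subspace transverse to $U$. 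Converting the codimension count into an actual slice estimate is precisely where Marstrand's projection theorem enters (absolute continuity of the projection of the Patterson--Sullivan measure onto $N/U$ when $\delta_\Gamma>n-m$, for almost every $m$-plane, upgraded to every $m$-plane via $M$-invariance and Zariski density, exactly as in the Fubini argument of Proposition \ref{prop.singular} of this paper). Without that ingredient the divergence of the return integral is not established. The exponent itself is right --- it matches the return-rate computation in the last section of the paper --- but the heuristic does not yet prove the bound.

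The second gap is the propagation step $U\to N$, which you correctly identify as the heart of the matter but do not actually carry out: ``BMS-mixing makes the expanded images equidistribute transversally'' and ``the Hopf chains connect almost every pair of points'' are statements of what must be proved, not arguments. Moreover, the suggested use of $M$-invariance to ``rotate $U$ through all $m$-planes'' does not apply to a single $U$-invariant function $f$: conjugating by $m\in M$ turns $f$ into the function $x\mapsto f(xm)$, which is invariant under $mUm^{-1}$ --- a different function invariant under a different plane --- so one cannot conclude that $f$ itself is invariant under the group generated by all the conjugates of $U$. The actual proofs replace this by a quantitative ``window theorem'' for the leafwise (conditional) measures $\sigma(x)$, again powered by Marstrand, showing that $U$-ergodic components must be saturated along full $N$-leaves. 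Your outline has the correct skeleton (conservativity plus a Hopf-type upgrade, with $\delta_\Gamma>n-m$ as the common threshold), but both load-bearing steps still need the projection-theorem input that you have not supplied.
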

This result was also obtained by Maucourant and Schapira \cite{MaucourantSchapira} under the weaker hypothesis that $\Gamma$ has finite BMS measure. The case when $\delta_\Gamma < n-m$ has also been settled by these authors:
\begin{theoreme}[Maucourant-Schapira, \cite{MaucourantSchapira}]
  Assume that $\Gamma$ is convex-cocompact and Zariski-dense. Let $m$ be an integr, $1 \leq m \leq n-1$, and $U$ be an $m$-plane in $N$. If $\delta_\Gamma < n-m$, then $m_\BR$ is totall $U$-dissipative. In particular, it is not ergodic.
\end{theoreme}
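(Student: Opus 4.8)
The plan is to establish total dissipativity through the occupation (Hopf) criterion: for a measure-preserving action of $U\cong\R^m$, total $U$-dissipativity is equivalent to the finiteness, for $m_\BR$-almost every $x$, of the occupation integral $\Phi_K(x)=\int_U \mathbf 1_K(xu)\,\mathrm{d}u$ for a fixed compact set $K\subset\Gamma\backslash G$ of positive measure. Once this is known, non-ergodicity is automatic: a totally dissipative action of a non-compact group admits a measurable cross-section, and saturating a proper subset of it yields a non-trivial $U$-invariant set of intermediate measure. A useful preliminary remark is that the conservative part $C_U$ is invariant (modulo null sets) under the frame flow, because $a_t$ conjugates $U$ to itself (by \eqref{form.equivariance}, $a_{-t}ua_t=e^t u$ leaves the linear subspace $U$ globally invariant) and hence carries $U$-orbits to $U$-orbits; this will let us localize the almost-everywhere statement to a transversal rather than integrate $\Phi_K$ globally.

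First I would exploit the local product structure of $m_\BR$. Along each unstable horosphere (an $N$-orbit, on which $u^-=\xi$ is constant) the measure $m_\BR$ is Lebesgue, and the image of the $U$-orbit of $x=\Gamma g$ under the positive-endpoint map is an affine $m$-plane $P=\eta_0+U$ inside $\partial\Hr^{n+1}\setminus\{\xi\}\cong\R^n$, while the transverse direction $N/U\cong\R^{n-m}$ again carries Lebesgue. The recurrence of the $U$-orbit to $K$ is governed by how often $P$ returns near the limit set $\Lambda_\Gamma$, whose Hausdorff dimension is $\delta_\Gamma$. The conceptual reason for dissipativity is now the Marstrand--Mattila slicing theorem: an affine $m$-plane has codimension $n-m$, so when $\delta_\Gamma<n-m$ almost every translate $P$ (with respect to the Lebesgue measure on $N/U$, which is exactly what $m_\BR$ sees transversally) meets $\Lambda_\Gamma$ in a set of dimension $\le\delta_\Gamma-(n-m)<0$, hence not at all.

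To turn this into the quantitative bound $\Phi_K(x)<\infty$ I would renormalize by the flow, writing $u=a_{-T}u'a_T$ with $|u'|\asymp1$ to decompose the occupation into exponential scales indexed by $T\ge0$. The scale-$T$ contribution records returns of $P$ near the points $\gamma o$ with $d(o,\gamma o)\approx T$. For the full horosphere $U=N$ these returns are governed by Sullivan's shadow lemma together with the orbit count $\#\{\gamma:d(o,\gamma o)\le T\}\asymp e^{\delta_\Gamma T}$, and they reproduce the known growth $\asymp e^{\delta_\Gamma T}$ of the full horospherical occupation of $K$. Restricting from $N$ to the $m$-plane $U$ inserts, relative to this count, the transverse-slice factor $\asymp e^{-(n-m)T}$ --- the average $\mu$-mass of an $e^{-T}$-neighbourhood of $P$, computed by a Fubini argument over the $N/U$-transversal --- so that the expected scale-$T$ contribution to $\Phi_K$ is $\asymp e^{(\delta_\Gamma-(n-m))T}$. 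Since $\delta_\Gamma<n-m$ this geometric series converges.

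The main obstacle is upgrading this \emph{expected} bound to an almost-everywhere finiteness with respect to $m_\BR$. The flow renormalization naturally equidistributes with respect to $\BMS$, which is mutually singular with $m_\BR$, so one cannot simply integrate $\Phi_K$ against $m_\BR$; indeed $m_\BR$ is $N$-invariant, hence $U$-invariant, and that integral diverges outright. Instead I would run a Borel--Cantelli argument directly against $m_\BR$, using the product structure to estimate the $m_\BR$-measure of the scale-$T$ return sets and the shadow lemma to control their overlaps, so that $\sum_T e^{(\delta_\Gamma-(n-m))T}<\infty$ forces $\Phi_K(x)<\infty$ for $m_\BR$-almost every $x$. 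The flow-invariance of $C_U$ recorded in the first step is what allows one to carry out this estimate on a set of finite mass, thereby circumventing the divergence coming from the $N$-invariance of $m_\BR$.
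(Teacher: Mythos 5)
First, a point of order: the paper does not prove this statement --- it is quoted from Maucourant--Schapira \cite{MaucourantSchapira} as background, so there is no internal proof to compare against; your proposal has to be judged on its own merits against the known strategy. Your architecture is reasonable in outline (the Hopf occupation criterion $\Phi_K(x)=\int_U \mathbf 1_K(xu)\,\mathrm{d}u<\infty$ for an exhausting family of compacta, non-ergodicity from total dissipativity, quasi-invariance of the conservative part under $a_t$ since $a_t$ normalizes $U$), but the core of the argument has two genuine gaps. The first is geometric: your qualitative mechanism --- ``since $\delta_\Gamma<n-m$, Lebesgue-a.e.\ translate $P=\eta_0+U$ misses $\Lambda_\Gamma$, hence no returns'' --- is not the right mechanism. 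For $m_{\mathrm{BR}}$-a.e.\ $x$ the \emph{negative} endpoint $x^-$ is distributed according to the Patterson--Sullivan measure, so $x^-\in\Lambda_\Gamma$; in the affine chart $\partial\Hr^{n+1}\setminus\{x^-\}\cong\R^n$ the closure of every translate $P$ is an $m$-sphere through $x^-$, so $\overline{P}$ meets $\Lambda_\Gamma$ for \emph{every} $\eta_0$. Moreover the return condition for $xu\in K$ is $d(\eta_0+u,\Lambda_\Gamma)\le C$ at a \emph{fixed} scale $C$, and $\Lambda_\Gamma\setminus\{x^-\}$ is unbounded in this chart; a plane disjoint from an unbounded closed set can still enter its $C$-neighbourhood for unboundedly many $u$. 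What actually governs recurrence at large $|u|$ is how tangentially $\Lambda_\Gamma$ approaches $x^-$ relative to the sphere $\overline{P}$, a non-tangency statement at a PS-generic point, not an avoidance statement in the chart.

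The second gap is in the quantitative step, which is where the theorem really lives. Your scale-$T$ first-moment bound $e^{(\delta_\Gamma-(n-m))T}$ tacitly assumes that the $\asymp e^{\delta_\Gamma T}$ unit balls covering $\Lambda_\Gamma\cap\{|\eta|\asymp e^T\}$ spread their projections to $N/U$ evenly over the transverse window of radius $\asymp e^T$. Nothing forbids concentration: if the far annuli of $\Lambda_\Gamma$ project into a bounded transverse window (e.g.\ $\Lambda_\Gamma$ asymptotically tangent at $x^-$ to a translate of $U$), the measure of the scale-$T$ return event is $\asymp 1$ on a fixed positive-measure set of transversals, the Borel--Cantelli series diverges there, and your ``overlap control via the shadow lemma'' is an announcement rather than an argument; making it work requires quantitative non-concentration estimates for the projected Patterson--Sullivan measure at $x^-$, which is the substantive content of the Maucourant--Schapira proof. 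Finally, your proposed bridge between $m_{\mathrm{BR}}$ and $\BMS$ --- quasi-invariance of the conservative part $C_U$ under the frame flow, plus ``localizing to a transversal of finite mass'' --- does not do the required work: $C_U$ is not a priori $N$-invariant (only $N$-saturated sets transfer fullness from $\BMS$ to $m_{\mathrm{BR}}$, as in the paper's introductory lemma), and since the two measures are mutually singular, an $a_t$-quasi-invariant set that is $m_{\mathrm{BR}}$-null or co-null carries no $\BMS$ information at all. So the acknowledged ``main obstacle'' is precisely the point at which the proposal stops being a proof.
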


Mohammadi-Oh and Maucourant-Schapira  use Marstrand's projection Theorem to look at the geometry of the BMS measure along $U$ and $N$. For more on this, see \cite{preprintOM}.

In this paper, we use Besicovitch-Federer's projection theorem to study the case $\delta_\Gamma = n-m$. Our main result is the following

\begin{theoreme}
Assume that $\Gamma$ is convex-cocompact and Zariski-dense. Let $m$ be an integer, $1 \leq m \leq n-1$. If $\delta_\Gamma=n-m$, then the Burger-Roblin measure is recurrent with respect to any $m$-plane $U$ in $N$.
\end{theoreme}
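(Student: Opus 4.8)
\section{A proof proposal}

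The plan is to establish recurrence by contraposition, reducing it to a rigidity statement about $\Lambda_\Gamma$ and then excluding the bad scenario by Zariski-density. Throughout I would work in the upper half-space model in which the repelling endpoint $\xi=u^-$ (the one fixed by $N$, and distributed according to the Patterson-Sullivan density in the expression for $m_\BR$) is sent to $\infty$; then $N\simeq\R^n$ acts by horizontal translations preserving the height, the attracting endpoint $\eta=u^+$ ranges (for $m_\BR$) over $\R^n$ against Lebesgue measure, and a frame lies within bounded distance of the convex core exactly when its $\eta$-coordinate sits within a bounded multiple of the local scale of a ``thick'' portion of $\Lambda_\Gamma$. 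Since the core is $\Gamma$-invariant, recurrence of the $U$-orbit through a point with forward endpoint $y$ amounts to the affine $m$-plane $y+U$ meeting arbitrarily large such thickened portions of $\Lambda_\Gamma$; writing $p\colon\R^n\to U^\perp\simeq\R^{n-m}$ for the projection killing $U$, this amounts to the Lebesgue-generic transverse coordinate $p(y)$ lying in the $e^{-k}$-neighborhood of $p(\Lambda_\Gamma)$ for infinitely many scales $k$. Thus $U$-recurrence becomes a shrinking-target (Borel-Cantelli) problem on $U^\perp$.

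The first step is a first-moment computation. Conjugation by $a_t$ rescales $U$ by the factor $e^t$ (equation~\eqref{form.equivariance}), so the scale-$k$ returns are governed by the shadows of the orbit points $\gamma o$ with $d(o,\gamma o)\approx k$: these have projected radius $\approx e^{-k}$ and number $\approx e^{\delta_\Gamma k}$, so at the critical exponent $\delta_\Gamma=n-m$ each scale contributes a bounded-below amount of Lebesgue mass, $e^{(\delta_\Gamma-(n-m))k}=1$. The divergence type of a convex-cocompact group at its critical exponent (divergence of the Poincar\'e series) then makes the total mass over scales diverge. This is the quantitative engine forcing recurrence, and it is exactly where $\delta_\Gamma=n-m$ is used: for $\delta_\Gamma<n-m$ the same series converges, recovering the total dissipativity of Maucourant-Schapira.

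The heart of the matter is to convert this divergence into recurrence for Lebesgue-almost-every transverse coordinate, i.e. to rule out that the divergent mass concentrates along a null set through which the $U$-orbits escape. Suppose on the contrary that $m_\BR$ had a $U$-dissipative component of positive measure; a positive-measure wandering set meets almost every $U$-orbit in a bounded set. As $u\to\infty$ the forward endpoint $\eta+u$ approaches the fixed point $\xi$, so this boundedness is a statement about the fine structure of $\Lambda_\Gamma$ in the blow-ups at the (Patterson-Sullivan--typical, conical) point $\xi$; unwinding it through the self-similarity produces a subset $E\subset\Lambda_\Gamma$ of positive $\mathcal{H}^{n-m}$-measure on which $p$ is essentially injective, hence bi-Lipschitz onto a set of positive Lebesgue measure in $U^\perp\simeq\R^{n-m}$, so that $E$ is $(n-m)$-rectifiable. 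This is precisely the structure that the Besicovitch-Federer projection theorem is designed to detect: for $\mu_o$ (equivalently $\mathcal{H}^{n-m}|_{\Lambda_\Gamma}$) the rectifiable and purely unrectifiable parts are separated by the size of their projections, and the existence of $E$ says $\Lambda_\Gamma$ carries a rectifiable piece of positive measure. But a Zariski-dense convex-cocompact subgroup of $\mathbf{SO}^o(1,n+1)$ cannot have such a limit set: an approximate tangent $(n-m)$-plane defined $\mu_o$-almost everywhere on a rectifiable piece is propagated by the conformal $\Gamma$-action into a $\Gamma$-invariant field of planes, contradicting density of $\Gamma$ in $G$. Since pure unrectifiability of $\Lambda_\Gamma$ is intrinsic, this excludes a dissipative component for every $m$-plane $U$ simultaneously, which is what lets the conclusion survive the passage from the almost-every-direction Besicovitch-Federer statement to all directions.

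The main obstacle I anticipate is the construction in the previous paragraph: extracting from an abstract dissipative component a subset of $\Lambda_\Gamma$ on which $p$ is genuinely injective with image of positive Lebesgue measure, rather than merely of positive dimension. This is where the critical balance and the first-moment computation must be fed back in: at $\delta_\Gamma=n-m$ the projected Patterson-Sullivan mass is carried in the correct dimension $n-m$, so a dissipative channel is forced to be genuinely $(n-m)$-dimensional of positive measure and the rectifiability conclusion becomes available. Controlling this passage, together with a clean proof that Zariski-density precludes any positive-measure rectifiable piece of $\Lambda_\Gamma$, is the crux of the argument.
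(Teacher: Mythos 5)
Your overall shape of argument is the right one---argue by contradiction, and collide a rectifiability/projection statement (Besicovitch--Federer) with a rigidity statement forced by Zariski-density---and your two ``soft'' ingredients do correspond to Propositions \ref{prop.unrectifiable} and \ref{prop.singular} of the paper. But there is a genuine gap at the step you yourself flag as the crux, and it is not a technical obstacle that can be ``controlled'': it is a missing theorem. From a dissipative component you can legitimately extract that the $\rho$-strips around $U$ have finite conditional measure, hence that the push-forward of $\sigma(x)$ (equivalently of the Patterson--Sullivan measure in the blow-up) under $p:N\to N/U$ is a \emph{Radon} measure on $\R^{n-m}$. What you cannot conclude from this, and what your sketch silently assumes, is that this projected measure charges a set of positive Lebesgue measure --- a Radon measure of dimension $n-m$ on $\R^{n-m}$ can perfectly well be singular, in which case no set $E$ with $p|_E$ ``essentially injective onto positive Lebesgue measure'' exists and no rectifiable piece is produced. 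Your first-moment computation does not close this: divergence of the expected number of scale-$k$ hits (the Poincar\'e series at the critical exponent) gives an \emph{upper-bound heuristic} for the strip mass but no lower bound and no Borel--Cantelli conclusion without a second-moment or quasi-independence estimate, which is precisely where mass could concentrate on a Lebesgue-null transverse set. Indeed the paper's return-rate proposition shows the strip mass grows subexponentially, so the divergence is far too delicate for a naive counting argument.

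The paper fills this hole with a dynamical rigidity input that has no counterpart in your sketch: the projected conditionals assemble into an Ergodic Fractal Distribution of dimension $n-m$ on $N/U\simeq\R^{n-m}$, and by Hochman's Proposition 6.4 (ultimately a Ledrappier--Young type statement) the \emph{only} such distribution of full dimension is the Dirac mass at Lebesgue measure. That is what upgrades ``the projection is Radon'' to ``the projection is Haar, hence absolutely continuous,'' which then contradicts the singularity supplied by Besicovitch--Federer applied to the purely unrectifiable conditionals. Without this (or an equivalent exact-dimensionality-implies-Lebesgue argument), your contradiction does not materialize. Two smaller points: the paper's proof of pure unrectifiability also runs through tangent measures and EFDs (a rectifiable piece forces the tangent distribution to concentrate on flat measures, so $\sigma(x)$ would live on a $\delta_\Gamma$-plane, contradicting Zariski-density of its support), which is cleaner than propagating an approximate tangent field by the $\Gamma$-action; and the passage from ``almost every $m$-plane'' in Besicovitch--Federer to ``all $m$-planes'' is handled by $M$-invariance of $m_{\mathrm{BMS}}$ plus Fubini, not by intrinsicness of unrectifiability alone.
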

Whether the BR measure is ergodic with respect to $U$ under these hypotheses remains an open question. We will see that the return rate of $U$-orbits is quite low (\emph{i.e.} subexponential) but this does not contradict ergodicity since BR is not finite.

Let us mention that the Theorem is not empty; indeed it is possible to construct some Zariski-dense convex-cocompact group $\Gamma \subset \mathbf{SO}^o(1,3)$ with $\delta_\Gamma=1$. Start with the Apollonian gasket associated to 4 mutually tangent circles on the boundary of $\Hr^3$; the limit set has dimension $\delta_\Gamma > 1$. Now shrink continuously the radii of the circles, thus lowering continuously $\delta_\Gamma$. The deformed group will remain Zariski-dense because the centers of the circles are not aligned. For details see \cite{McMullen}.

With this result for $\delta_\Gamma=n-m$, the situation is summarized in the following table. We assume that $\Gamma$ is Zariski-dense, has finite BMS measure, and we fix some $m$-plane $U$ in $N$ with $1 \leq m \leq n-1$. With respect to $U$, the BMS and BR measures are:
\begin{center}\begin{tabularx}{10.5cm}{|l|X|X|X|}
  \cline{2-4} \multicolumn{1}{r|}{} & $\delta_\Gamma < n-m$ & $\delta_\Gamma=n-m$ & $\delta_\Gamma>n-m$ \\
  \hline BMS & dissipative \cite{preprintOM} &  dissipative  \cite{preprintOM} & recurrent and ergodic \cite{MaucourantSchapira} \\
  \hline BR & totally dissipative \newline if $\Gamma$ convex-cocompact \cite{MaucourantSchapira} & recurrent if $\Gamma$ convex-cocompact & recurrent  and ergodic \cite{MaucourantSchapira}  \\ \hline
\end{tabularx}
\end{center}

Note that it follows immediately from the definitions that if the BMS measure is recurrent, so is the BR measure. The other implications are not so obvious.

We now sketch briefly our argument. In order to prove that the BR measure is $U$-recurrent (where $U$ is some $m$-plane), we need to show that the $U$-orbit of $m_{\mathrm{BR}}$-almost every $x \in \Gamma \backslash G$ will pass through some compact set $K$ infinitely often. If is enough to construct some sequence $h_k$ in $N$ that goes to infinity while staying uniformly close from $U$, such that $xh_k \in K$; indeed, if $u_k$ is the orthogonal projection of $h_k$ onto $U$, the sequence $u_k$ still goes to infinity and $x u_k$ will belong to some compact $K'$ that is just slightly bigger than $K$.

To show that such a sequence $(h_k)_k$ exists, our strategy is to prove that any $\rho$-neighbourhood of $U$ in $N$ has infinite measure with respect to the conditional measure of $\BMS$ along $N$; we then use the fact that the support of $\BMS$ is a compact set. This is the main reason why we need $\Gamma$ to be convex-cocompact.

In order to prove that any ``strip'' along $U$ has infinite measure, we argue by contradiction: if some $\rho$-neighbourhood has finite measure with respect to the conditional measure of $\BMS$ along $N$, then this must hold almost surely for any neighbourhood as large as we like (because of the self-similarity of the conditional measures). In particular we can project these conditional measures onto $N/U$ and end up with a family of Radon measures. These ``transversal'' Radon measures must still have dimension $\delta_\Gamma=n-m$ (this was shown in \cite{preprintOM}), and this implies in turn that they must be the Lebesgue measure of $N/U$. On the other hand, the Besicovitch-Federer projection Theorem implies that the projection of the conditional measures onto $N/U$ must be singular with respect to the Lebesgue measure, because the conditional measure are purely unrectifiable. Hence our Theorem is proved.

The push-forward of the Borel measure $\mu$ through the Borel function $f$ is denoted by $f\mu$; thus $f\mu (A)=\mu(f^{-1}(A))$ for any Borel set $A$.

For any set $E$, we denote by $\mathbf{1}_E$ the characteristic function:
\[ \mathbf{1}_E(x) = \left\{ \begin{array}{lll} 1& \mathrm{if} & x \in E \\ 0 & \mathrm{if} & x \notin E
  \end{array}
   \right.  \]

\section{Proof of the main theorem}
\subsection{Preliminary setup}
In order to study the BR measure with respect to some $m$-plane $U$ in
$N$, it is useful to look at the geometry of the BMS measure with
respect to the foliation induced by $U$ in the $N$-orbits (more
precisely, with respect to the projection along this foliation).

The technical tool that allows this is disintegration of measures.

Since we are going to apply tools from classical geometric measure
theory, we want to work with measures living on $N$ (recall that $N$
identifies with the Euclidean space $\R^n$). To $\BMS$-almost every $x
\in \Gamma \backslash G$ we are going to associate a measure (more
precisely, a \emph{projective} measure, \emph{i.e.} a measure
\emph{modulo} a positive scalar) $\sigma(x)$ on $N$ that reflects the
geometry of $\BMS$ along the unstable horosphere passing through $x$.

We now set up the needed formalism. The operation of $N$ on $G$ (on
the right) is smooth (\emph{i.e.} the quotient Borel space $G/N$ is a
standard Borel space). Lift $\BMS$ (which lives on $\Gamma \backslash
G$) to $G$; the measure we get is a $\Gamma$-invariant Radon measure
$\tilde{m}_{\mathrm{BMS}}$. Disintegrate this measure along $N$; for
almost every $g \in G$ we thus get a measure $m_{gN}$ supported on
$gN$ (see \cite{NegCurv} section 3.9 for a description of this
measure).

In general when disintegrating an infinite measure, the conditional
measures are canonically defined only up to a (non-zero) scalar; in
fact here there is a way to normalize them in a canonical way (by
introducing an appropriate measure on the space of horospheres, more
precisely this space lifted by $M$) but this would not be useful for
our purpose. See \emph{e.g.} \cite{Roblin}.

We now want to look at measures on $N$ instead of measures on $G$. For
any $g \in G$, there is a mapping $\phi_g : N \to G$ which
parametrizes the ``unstable horosphere'' $H^+(g)=gN$ in the usual way:
$\phi_g(h)=gh$ for any $h \in N$.

Since $\tilde{m}_{\mathrm{BMS}}$ is $\Gamma$-invariant, the pull-back
measures
\[ (\phi_g)^{-1} (m_{gN}), \quad (\phi_{\gamma g})^{-1} (m_{\gamma g
  N}) \] (which live on $N$) are equal up to a scalar multiple, for
$\tilde{m}_{\mathrm{BMS}}$-almost every $g \in G$ and every $\gamma
\in \Gamma$.

Let $\mathcal{M}_{\mathrm{rad}}(N)$ be the space of positive Radon
measures on $N$ and $\mathcal{M}_{\mathrm{rad}}^1(N)$ be the space of
projective classes of Radon measures on $N$, that is, the quotient of
$\mathcal{M}_{\mathrm{rad}}(N)$ by the equivalence relation
\[ \mu \sim \nu \Leftrightarrow \nu = t \mu,\quad t>0 \text. \]

We define a mapping $\sigma : \Gamma \backslash G \to
\mathcal{M}_{\mathrm{rad}}^1(N) $ by letting $\sigma(x)$ be the
projective class of
\[ (\phi_g)^{-1} (m_{gN}) \] if $x = \Gamma g$. This is well-defined
$\BMS$-almost everywhere.

We say that $\sigma$ is obtained by \emph{disintegrating $\BMS$ along
  $N$. }

This is a particular instance of the general theory of conditional
measures along a group operation, see \cite{preprint} or \cite{thesis}
(Chapter 2).

We record the following facts which we will use freely throughout this
paper:
\begin{lemma}\label{lemma.collect}
  \begin{enumerate}
  \item If some Borel subset $\Omega \subset \Gamma \backslash G$ has
    full $\BMS$-measure, then for $\BMS$-almost every $x$, the set
    \[\{ h \in N\ ;\ xh \in \Omega \}\] has full $\sigma(x)$-measure.

  \item There is a Borel subset $X \subset \Gamma \backslash G$ of
    full $\BMS$-measure such that if $x \in X$ and $h_0 \in H$ are
    such that $xh_0 \in X$, then $\sigma(xh_0)$ is the push-forward of
    $\sigma(x)$ through left translation by $h_0$ in $N$,
    \[ h \mapsto h_0h \text. \]
  \item For $\BMS$-almost every $x \in \Gamma \backslash G$, the
    origin of $N$ belongs to the support of $\sigma(x)$.
  \item For any $t \in \R$ and $\BMS$-almost every $x \in \Gamma
    \backslash G$,
    \[ \sigma(x a_t) = S_t \sigma(x) \] \emph{i.e.} $\sigma(x a_t)$ is
    the push-forward of $\sigma(x)$ through the ghomothety $S_t : N
    \to N$.
  \item For any $m \in M$, and $\BMS$-almost every $x \in \Gamma
    \backslash G$, $\sigma(xm)$ is the push-forward of $\sigma(x)$
    through the mapping $h \mapsto mhm^{-1}$. (Recall that the
    operation of $M$ by conjugation on $N$ identifies with the
    canonical operation of $\mathbf{SO}(n)$ on $\R^n$.)
  \item For $\BMS$-almost every $x \in \Gamma \backslash G$ and
    $\sigma(x)$-almost every $h\in N$,
    \[ 0 < \liminf_{\rho \to 0}
    \frac{\sigma(x)(B(h,\rho))}{\rho^{\delta_\Gamma}} \leq
    \limsup_{\rho \to 0}
    \frac{\sigma(x)(B(h,\rho))}{\rho^{\delta_\Gamma}} < \infty
    \text. \]
    
  \end{enumerate}
\end{lemma}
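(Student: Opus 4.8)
The plan is to treat items (1)--(5) as formal consequences of the disintegration formalism together with the symmetries of $\BMS$, and to isolate the genuine geometric content in item (6). I will work with the $\Gamma$-invariant Radon measure $\tilde m_{\mathrm{BMS}}$ on $G$, its disintegration $\tilde m_{\mathrm{BMS}}=\int_{G/N} m_{gN}\,\mathrm{d}\bar m(gN)$ over the smooth quotient $G/N$, and the pulled-back conditional $\sigma(x)=[(\phi_g)^{-1}m_{gN}]$. The mechanism I will use repeatedly is this: whenever right translation $R_q$ by some $q\in G$ maps the fibre $gN$ onto another fibre $gqN$ and preserves $\tilde m_{\mathrm{BMS}}$ (or sends it to a proportional measure on that fibre), the two conditional measures are related by $R_q$, and reading this off in the coordinate $\phi_g$ yields the corresponding transformation of $\sigma$.

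For item (1), I would lift $\Omega$ to a $\Gamma$-invariant Borel set $\tilde\Omega\subset G$ of full local $\tilde m_{\mathrm{BMS}}$-measure; by the defining property of the disintegration, $m_{gN}(gN\setminus\tilde\Omega)=0$ for $\bar m$-almost every fibre, and applying $(\phi_g)^{-1}$ turns this into the assertion that $\{h:xh\in\Omega\}$ is $\sigma(x)$-conull for $\BMS$-almost every $x$. For item (2), note that $gh_0N=gN$ is a single fibre, so $m_{gN}$ and $m_{gh_0N}$ agree projectively; since $\phi_{gh_0}(h)=gh_0h=\phi_g(h_0h)$, that is $\phi_{gh_0}=\phi_g\circ L_{h_0}$ with $L_{h_0}(h)=h_0h$, pulling the common conditional back through the two parametrizations expresses $\sigma(xh_0)$ as the push-forward of $\sigma(x)$ under a left translation, which is item (2) (up to the orientation convention for $L_{h_0}$). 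Here one restricts to a full-measure set $X$ on which the $\Gamma$-equivariance of $g\mapsto\sigma$ holds simultaneously, as in \cite{preprint} and \cite{thesis}.

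Items (3)--(5) follow the same template. That the neutral element lies in $\mathrm{supp}\,\sigma(x)$ for $\BMS$-almost every $x$ is the standard property of leafwise measures (\cite{NegCurv}, \cite{preprint}), reflecting that $x=\phi_g(e)$ is a typical point of its own horosphere. For item (4) I recall that $\tilde m_{\mathrm{BMS}}$ is invariant under the frame flow, i.e.\ under right translation by $A$; since $A$ normalizes $N$ and $ha_t=a_tS_t(h)$ by (\ref{form.equivariance}), the map $R_{a_t}$ sends the fibre $gN$ onto $ga_tN$ and acts in the $N$-coordinate by the homothety $S_t$, whence $\sigma(xa_t)=S_t\sigma(x)$. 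For item (5), $\tilde m_{\mathrm{BMS}}$ is invariant under right translation by $M$ (this is how the lift of $\BMS$ to the frame bundle is defined), and $M$ normalizes $N$ with conjugation identified with the operation of $\mathbf{SO}(n)$ on $\R^n$; the same argument expresses $\sigma(xm)$ as the image of $\sigma(x)$ under $h\mapsto mhm^{-1}$.

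The substantive point, and the step I expect to be the main obstacle, is item (6), which is where convex-cocompactness enters. I would parametrize the horosphere $gN$ by the visual map $\iota_g:N\to\partial\Hr^{n+1}\setminus\{g^\sharp\}$ sending $h$ to the endpoint of $gh$ that is displaced by $N$, the other endpoint $g^\sharp$ being fixed along the fibre. Comparing with the Hopf coordinates, the conditional $m_{gN}$ reads in this parametrization as $e^{\delta_\Gamma b_\eta(x,gh)}\,\mathrm{d}\mu_x(\eta)$ with $\eta=\iota_g(h)$, so that $\sigma(x)$ is, up to a continuous positive density, the push-forward of the Patterson--Sullivan measure $\mu_x$. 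For convex-cocompact $\Gamma$ the measure $\mu_x$ is Ahlfors $\delta_\Gamma$-regular on the limit set, $C^{-1}r^{\delta_\Gamma}\le\mu_x(B(\eta,r))\le C\,r^{\delta_\Gamma}$, by Sullivan's shadow lemma (\cite{Sullivan}, \cite{Coornaert}). The hard part is to make the comparison between the Euclidean metric on $N$ and the spherical metric on $\partial\Hr^{n+1}$ transported by $\iota_g$ quantitatively uniform: one must control the Busemann cocycle $b$ and the distortion of the conformal diffeomorphism $\iota_g$ on the compact support of $\BMS$, so that a Euclidean $\rho$-ball $B(h,\rho)\subset N$ corresponds to a boundary ball of radius comparable to $\rho$ while the density $e^{\delta_\Gamma b}$ stays bounded above and below. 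Granting this, the regularity of $\mu_x$ transfers to $\sigma(x)(B(h,\rho))\asymp\rho^{\delta_\Gamma}$, giving (6) with uniform constants on $\mathrm{supp}\,\BMS$; by contrast, the equivariance items (1)--(5) are soft once the disintegration is set up as in \cite{preprint} and \cite{thesis}.
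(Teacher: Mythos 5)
Your proposal is correct and follows essentially the same route as the paper's (very terse) proof: items 1--3 are formal consequences of the disintegration formalism, items 4 and 5 follow from the invariance of $\BMS$ under the frame flow and under $M$ combined with the normalization relation \eqref{form.equivariance}, and item 6 from the equivalence of $\sigma(x)$ with the Patterson--Sullivan measure together with Sullivan's shadow lemma (Ahlfors regularity) in the convex-cocompact case. The only delicate point is the left-translation orientation in item 2, which you correctly flag as a convention issue; there is nothing further to add.
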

\begin{proof}
  Statements 1, 2 and 3 are clear. Statement 4 holds because of
  invariance of $\BMS$ with respect to the geodesic flow and formula
  \eqref{form.equivariance}. Statement 5 holds because $\BMS$ is
  $M$-invariant by definition. Statement 6 holds because $\Gamma$ is
  convex-cocompact and $\sigma(x)$ is equivalent to the
  Patterson-Sullivan measure; see \cite{Coornaert}, Proposition 7.4
  and \cite{NegCurv}, section 3.9

\end{proof}

\begin{notation}If $\mu$ is a Borel measure or projective measure on
  $N$, the support of which contains the origin on $N$, we let
  \[ \mu^* = \frac{\mu}{\mu(B_1)} \] \emph{i.e.} $\mu^*$ is the
  measure colinear to $\mu$ that gives measure $1$ to the unit ball
  $B_1$.

  We also denote by $S_t^* \mu$ the measure $(S_t \mu)^*$.
\end{notation}
In particular, since for $\BMS$-almost every $x \in \Gamma \backslash
G$, the origin of $N$ belongs to the support of $\sigma(x)$, we denote
by $\sigma^*(x)$ the Radon measure on $N$ that belongs to the
projective class $\sigma(x)$ and such that the unit ball $B_1\subset
N$ has measure $1$:
\[ \sigma^*(x)(B_1)=1 \text. \]

We denote by $\mathrm{Dirac}(x)$ the Dirac mass at $x$, \emph{i.e.}
the probability measure giving measure $1$ to $\{x\}$. Associated to
$\BMS$ is the following probability measure on the space of Radon
measures on $N$:
\begin{equation} P = \int_{\Gamma \backslash G} \mathrm{d}\BMS(x)\
  \mathrm{Dirac}(\sigma^*(x))\text. \label{eq.P}
\end{equation}
Recall that we assume that $\Gamma$ is Zariski-dense and has finite
BMS measure, so that $P$ is an Ergodic Fractal Distribution (EFD) in
the sense of Hochman (see \cite{Hoch}, Definition 1.2, and
\cite{preprintOM}, Lemma 5.3 for a proof that P is indeed an EFD).

\subsection{Unrectifiability of the limit set}
Recall that a Radon measure $\mu$ on the Euclidean space $\R^n$ is said to be purely $m$-unrectifiable if for any Lipschitz mapping $f : \R^m \to \R^n$, the range $f(\R^m)$ has measure zero with respect to $\mu$.

Assume that the growth exponent $\delta_\Gamma$ is an integer $< n$. The fact that the limit set of $\Gamma$ is purely $\delta_\Gamma$-unrectifiable when $\Gamma$ is convex-cocompact and Zariski-dense (the latter hypothesis is obviously necessary) is probably well-known, and certainly very intuitive. We give a full proof of this fact as it is pivotal in our argument.

\begin{proposition} \label{prop.unrectifiable}
Assume that $\Gamma$ is convex-cocompact and Zariski-dense. If $\delta_\Gamma$ is an integer strictly smaller than $n$, the conditional measure $\sigma(x)$ is almost surely purely $\delta_\Gamma$-unrectifiable.
\end{proposition}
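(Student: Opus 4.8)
The plan is to argue by contradiction, combining the fact that $P$ is an ergodic fractal distribution with the classical infinitesimal description of rectifiable measures. Suppose the conclusion fails, and decompose $\sigma(x)=\sigma_{\mathrm r}(x)+\sigma_{\mathrm u}(x)$ into its $\delta_\Gamma$-rectifiable and purely $\delta_\Gamma$-unrectifiable parts. The failure means that $\sigma_{\mathrm r}(x)\neq 0$ for $x$ in a set of positive $\BMS$-measure. Intersecting this with the full-measure sets on which part~6 of Lemma~\ref{lemma.collect} holds and on which $\sigma^*(x)$ generates $P$ in the sense of Hochman, I fix one such $x$ together with a Lipschitz image $E\subset N$ carrying part of $\sigma_{\mathrm r}(x)$, so that $\sigma(x)(E)>0$. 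By part~6 of Lemma~\ref{lemma.collect} the upper and lower $\delta_\Gamma$-dimensional densities of $\sigma(x)$ are positive and finite at $\sigma(x)$-almost every point; together with the Marstrand--Mattila theory of rectifiable measures this yields, at $\sigma(x)$-almost every $h\in E$, a unique approximate tangent plane $V_h$ of dimension $\delta_\Gamma$ and the convergence of the blow-ups of $\sigma^*(x)$ at $h$ — rescaled by the homotheties $S_t$ and recentred — to the flat measure proportional to $\mathcal{H}^{\delta_\Gamma}$ restricted to $V_h$.

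The next step is to confront this with the fractal-distribution structure carried by $P$. Because $\sigma^*(x)$ is $P$-typical and $P$ is an EFD, the scenery distributions of $\sigma^*(x)$ at $h$ equidistribute, for $\sigma^*(x)$-almost every $h$, towards $P$ itself (see \cite{Hoch}). But at the points $h\in E$ found above the whole blow-up sequence converges to a single flat measure, so there the scenery distribution is the Dirac mass at that flat measure. Since the limit $P$ is independent of $h$, comparing the two descriptions on the positive-measure set $E$ forces $P$ to be a Dirac mass at one flat measure $\nu$; equivalently $\sigma^*(x)=\nu$ for $\BMS$-almost every $x$, so that $\sigma(x)$ is supported on an affine plane of dimension $\delta_\Gamma$ in $N$.

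It remains to exclude this degenerate situation using Zariski-density. In the horospherical coordinates furnished by $\phi_g$, the support of $\sigma(x)$ is the image, under a conformal chart, of the limit set $\Lambda_\Gamma$ with one point removed; if it is contained in an affine $\delta_\Gamma$-plane of $N$, then $\Lambda_\Gamma$ lies in a round $\delta_\Gamma$-sphere of $\partial\Hr^{n+1}$, that is, in the boundary of a totally geodesic copy of $\Hr^{\delta_\Gamma+1}$. As $\Lambda_\Gamma$ is $\Gamma$-invariant and $\Gamma$ is non-elementary, $\Gamma$ must then preserve this totally geodesic subspace and hence lie in a proper algebraic subgroup of $G$ (a conjugate of the stabiliser $\mathbf{SO}^o(1,\delta_\Gamma+1)\times\mathbf{SO}(n-\delta_\Gamma)$), which is impossible since $\delta_\Gamma<n$. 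This contradiction establishes the proposition. I expect the crux to be the first two steps: one must check, using only the two-sided density bounds of part~6 of Lemma~\ref{lemma.collect}, that at almost every point of the rectifiable part the \emph{entire} blow-up sequence (not merely some subsequential tangent measure) converges to a flat measure, and one must correctly identify the pointwise scenery limit with the global distribution $P$; by comparison the Zariski-density step is soft.
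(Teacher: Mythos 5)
Your argument is correct and follows essentially the same route as the paper: contradiction via a positive-measure Lipschitz image, the two-sided density bounds of Lemma~\ref{lemma.collect}.6 combined with the Marstrand--Mattila theory (the paper cites Mattila, Theorem~16.7 and Lemma~14.5 for exactly the point you flag as the crux, namely convergence of the \emph{whole} normalized blow-up to a flat measure), then equidistribution of sceneries to the EFD $P$ forcing $P$ to charge only flat measures, and finally the contradiction with Zariski-density. Your last step is spelled out in more detail than the paper's one-line appeal to Zariski-density of the support of $\sigma(x)$, but the substance is identical.
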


\begin{proof}
Let $\Omega$ be the set of all  $x \in \Gamma \backslash G$ such that
  \[ \frac{1}{T} \int_0^T \mathrm{Dirac}(S_t^* \sigma(x)) \mathrm{d} t \]
  converges weakly to $P$ (recall equation \eqref{eq.P}) as $T \to +\infty$. This set has full BMS measure (\cite{preprintOM}, Lemma 5.4). Now fix some $x_0 \in \Omega$ such that for $\sigma(x_0)$-almost every $h \in N$, $x_0h \in \Omega$ (see Lemma \ref{lemma.collect}).

  We argue by contradiction. Assume that  some subset $L \in N$ is the image of a Lipschitz mapping $\R^{\delta_\Gamma} \to N$ and satisfies
  \[ \sigma(x_0)(L)>0\text.\]
  Note that the restriction $\sigma(x_0)|L$, which we denote by $\sigma_L(x_0)$, is $\delta_\Gamma$-rectifiable, and satisfies
  \[ 0 < \liminf_{\rho \to 0} \frac{\sigma_L(x_0)(B(h,\rho))}{\rho^{\delta_\Gamma}} \leq \limsup_{\rho \to 0} \frac{\sigma_L(x_0)(B(h,\rho))}{\rho^{\delta_\Gamma}} < \infty \]
  for $\sigma_L(x_0)$-almost every $h$ (Lemma \ref{lemma.collect}). By virtue of \cite{Mattila}, Theorem 16.7 and Lemma 14.5, for $\sigma_L(x_0)$-almost every $h$, there is a $\delta_\Gamma$-plane $V(h)$ such that
  \[ S_{t}^* \sigma(x_0h) \]
  converges weakly to the Haar measure on $V(h)$ as $t \to \infty$ 

  Recall that for $\sigma(x_0)$-almost every $h$,
  \[ \frac{1}{T} \int_0^T \mathrm{Dirac}(S_t^* \sigma(x_0 h)) \; \mathrm{d} t \]
  also converges weakly to $P$ as $T$ goes to infinity.

  We thus see that $P$-almost every $\mu$ is the Haar measure on some $\delta_\Gamma$-plane.   In other words, for $\BMS$-almost every $x$ the conditional measure at $x$,  $\sigma(x)$,  is concentrated on some $\delta_\Gamma$-plane of $N$; this contradicts the fact that the support of $\sigma(x)$ must be Zariski-dense, since $\Gamma$ is Zariski-dense. Hence the proposition.
\end{proof}
\begin{corollary}
Under the same hypotheses, the limit set $\Lambda_\Gamma$ is purely $\delta_\Gamma$-unrectifiable.
\end{corollary}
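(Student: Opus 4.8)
The plan is to transport the pure unrectifiability of the conditional measures $\sigma(x)$ from the horospheric group $N$ to the boundary, where it becomes a statement about the Patterson-Sullivan density, and hence about the limit set $\Lambda_\Gamma$ itself. The whole point is that the geometry of $\sigma(x)$ along the unstable horosphere and the geometry of $\mu_o$ along $\Lambda_\Gamma$ are related by a conformal diffeomorphism.

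First I would fix a lift $g \in G$ of a $\BMS$-generic point $x = \Gamma g$ and recall that the endpoint map
\[ \Phi_g : N \to \partial \Hr^{n+1} \setminus \{ g^- \}, \qquad h \mapsto (gh)^+ \]
is a real-analytic diffeomorphism of $N \simeq \R^n$ onto the complement of the single point $g^-$ in the boundary (identified with $S^n$ through the base point $o$). This is the usual conformal parametrization of the unstable horosphere $gN$ by its forward endpoints; as one moves along $gN$ the negative endpoint stays equal to $g^-$ while the positive endpoint $\eta = (gh)^+$ sweeps out $\partial \Hr^{n+1} \setminus \{g^-\}$. The analytic input I need, already recorded in the proof of Lemma \ref{lemma.collect}(6), is that the push-forward $(\Phi_g)_* \sigma(x)$ is equivalent to the Patterson-Sullivan measure $\mu_o$ restricted to $\partial \Hr^{n+1} \setminus \{g^-\}$: indeed, disintegrating $\tilde{m}_{\mathrm{BMS}}$ along $N$ and reading off the Hopf-coordinate formula shows that $(\Phi_g)_*\sigma(x)$ has the smooth conformal density $e^{\delta_\Gamma b_\eta(\ldots)}$ against $\mu_x \sim \mu_o$, a density that is bounded above and away from zero on compact sets.

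Next I would observe that pure $\delta_\Gamma$-unrectifiability is a local bi-Lipschitz invariant of measures, and apply it across $\Phi_g$. Since $\Phi_g$ is a diffeomorphism, it is bi-Lipschitz on every compact subset of $N$ and of $\partial \Hr^{n+1} \setminus \{g^-\}$. Suppose, for contradiction, that some Lipschitz $f : \R^{\delta_\Gamma} \to S^n$ satisfied $(\Phi_g)_*\sigma(x)\bigl(f(\R^{\delta_\Gamma})\bigr) > 0$. Exhausting $\R^{\delta_\Gamma}$ by closed balls $B_k$ and noting that the excluded point $g^-$ carries no $\mu_o$-mass ($\Gamma$ being non-elementary, $\mu_o$ has no atoms), one of the compact images $f(B_k)$ would already satisfy $(\Phi_g)_*\sigma(x)\bigl(f(B_k)\bigr) > 0$; then $\Phi_g^{-1}$ is Lipschitz on $f(B_k)$, so $\Phi_g^{-1} \circ f|_{B_k}$ extends to a Lipschitz map $\R^{\delta_\Gamma} \to N$ whose image has positive $\sigma(x)$-measure, contradicting Proposition \ref{prop.unrectifiable}. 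Hence $(\Phi_g)_*\sigma(x)$, and therefore the equivalent measure $\mu_o$, is purely $\delta_\Gamma$-unrectifiable. This local bi-Lipschitz transfer, together with the careful handling of the omitted point $g^-$, is the only genuinely delicate step; everything else is a direct appeal to the Proposition and to the standard dictionary between conditional $\BMS$ measures and the Patterson-Sullivan density.

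Finally I would translate the statement from the measure $\mu_o$ back to the set $\Lambda_\Gamma$. Because $\Gamma$ is convex-cocompact, $\mu_o$ is comparable to the $\delta_\Gamma$-dimensional Hausdorff measure $\mathcal{H}^{\delta_\Gamma}$ restricted to $\Lambda_\Gamma$, as recalled in the introduction. Pure $\delta_\Gamma$-unrectifiability of $\mu_o$ therefore gives $\mathcal{H}^{\delta_\Gamma}\bigl(\Lambda_\Gamma \cap f(\R^{\delta_\Gamma})\bigr) = 0$ for every Lipschitz $f : \R^{\delta_\Gamma} \to S^n$, which is exactly the assertion that the limit set $\Lambda_\Gamma$ is purely $\delta_\Gamma$-unrectifiable.
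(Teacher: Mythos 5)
Your proof is correct and follows essentially the same route as the paper, which simply notes that $\Lambda_\Gamma$ is locally bilipschitz equivalent to the support of $\sigma(x)$ and lets the corollary ``follow readily'' from Proposition \ref{prop.unrectifiable}. You have merely made explicit the details the paper leaves implicit: the conformal endpoint parametrization of the unstable horosphere, the transfer of pure unrectifiability across a locally bi-Lipschitz map (with the minor care needed around the omitted point $g^-$), and the passage from the measure $\mu_o$ to the set $\Lambda_\Gamma$ via the comparability with Hausdorff measure in the convex-cocompact case.
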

Recall that the limit set $\Lambda_\Gamma$ is the set of accumulation points of $\Gamma$ in $\Hr^{n+1} \cup \partial \Hr^{n+1}$. It is locally bilipschitz equivalent to the support of $\sigma(x)$ for $\BMS$-almost every $x$, so that the corollary follows readily from the proposition.

\subsection{The conditional measures are transversally singular}

\begin{proposition} \label{prop.singular}
Assume that $\Gamma$ is Zariski-dense and convex-cocompact and that $\delta_\Gamma=n-m$ where $m$ is an integer, $1 \leq m \leq n-1$. Fix some $m$-plane $U$ in $N$.

For $\BMS$-almost every $x \in \Gamma \backslash G$, the push-forward of the conditional measure $\sigma(x)$ through the canonical projection $N \to N/U$ is singular with respect to the Lebesgue measure on $N/U$.
\end{proposition}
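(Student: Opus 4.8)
The plan is to combine the pure $(n-m)$-unrectifiability of the conditional measures (Proposition~\ref{prop.unrectifiable}) with the Besicovitch--Federer projection theorem, and then to use the rotational symmetry encoded in the $M$-invariance of $\BMS$ to replace the \emph{almost every} projection furnished by that theorem with the single distinguished projection $\pi : N \to N/U$. Since $\delta_\Gamma = n-m$, the target $N/U$ has dimension equal to $\delta_\Gamma$, which is exactly the critical dimension for the projection theorem. Throughout I identify $N/U$ with the orthogonal complement $U^\perp \simeq \R^{n-m}$ of $U$ in $N$; as $\pi$ and the orthogonal projection $p_{U^\perp} : N \to U^\perp$ differ by a linear isomorphism, the push-forward $\pi\mu$ is singular with respect to the Lebesgue measure on $N/U$ if and only if $p_{U^\perp}\mu$ is singular with respect to the $(n-m)$-dimensional Hausdorff measure $\mathcal{H}^{n-m}$ on $U^\perp$. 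It therefore suffices to treat orthogonal projections onto $(n-m)$-planes.

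First I would check that the distribution $P$ of \eqref{eq.P} is invariant under the action of $\mathbf{SO}(n)$ on measures on $N$. Writing $R_w : h \mapsto w h w^{-1}$ for the rotation of $N$ attached to $w \in M$ (so that $w \mapsto R_w$ identifies $M$ with $\mathbf{SO}(n)$ acting on $N \simeq \R^n$), Statement~5 of Lemma~\ref{lemma.collect} gives $\sigma(xw) = R_w\sigma(x)$; since $R_w$ preserves the unit ball $B_1$ it preserves the normalisation, so that $\sigma^*(xw) = R_w\sigma^*(x)$. Using the $M$-invariance of $\BMS$ one then obtains, for every bounded Borel $F$ and every $w \in M$,
\[ \int F(R_w\mu)\,\mathrm{d}P(\mu) = \int F(\sigma^*(xw))\,\mathrm{d}\BMS(x) = \int F(\mu)\,\mathrm{d}P(\mu), \]
i.e. $P$ is $\mathbf{SO}(n)$-invariant.

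Next comes the projection step. For $P$-almost every $\mu$ --- equivalently, for $\sigma(x)$ with $\BMS$-almost every $x$ --- the measure $\mu$ is purely $(n-m)$-unrectifiable by Proposition~\ref{prop.unrectifiable} and has positive finite $(n-m)$-density almost everywhere by Statement~6 of Lemma~\ref{lemma.collect}. These density bounds make $\mu$ and $\mathcal{H}^{n-m}$ restricted to $E = \mathrm{supp}\,\mu$ mutually absolutely continuous on bounded sets, so that each piece $E \cap B_R$ is a purely $(n-m)$-unrectifiable set of finite $\mathcal{H}^{n-m}$-measure. Applying the projection theorem (\cite{Mattila}, Theorem~18.1) to each $E \cap B_R$ and taking a countable union, the exceptional set
\[ \mathcal{B}(\mu) = \{ V : \mathcal{H}^{n-m}(p_V E) > 0 \} \]
is null for the $\mathbf{SO}(n)$-invariant probability measure $\gamma$ on the Grassmannian $\mathrm{Gr}$ of $(n-m)$-planes in $N$. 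As $p_V\mu$ is carried by $p_V E$, for every $V \notin \mathcal{B}(\mu)$ the projection $p_V\mu$ lives on an $\mathcal{H}^{n-m}$-null set, hence is singular with respect to $\mathcal{H}^{n-m}$.

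It remains to pass from ``$\gamma$-almost every $V$'' to the fixed plane $U^\perp$ by averaging over $M$. Because $R_w$ is orthogonal, $p_{U^\perp} \circ R_w = R_w \circ p_{R_w^{-1}U^\perp}$, so $p_{U^\perp}(R_w\mu)$ is singular with respect to $\mathcal{H}^{n-m}$ exactly when $R_w^{-1}U^\perp \notin \mathcal{B}(\mu)$. Let $\mathcal{E}$ be the set of Radon measures $\nu$ on $N$ for which $p_{U^\perp}\nu$ is \emph{not} singular with respect to $\mathcal{H}^{n-m}$. Since $w \mapsto R_w^{-1}U^\perp$ pushes the Haar probability measure of $M$ onto $\gamma$, the previous step gives $\int_M \mathbf{1}_{\mathcal{E}}(R_w\mu)\,\mathrm{d}w = \gamma(\mathcal{B}(\mu)) = 0$ for $P$-almost every $\mu$. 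Integrating the identity $P(\mathcal{E}) = \int \mathbf{1}_{\mathcal{E}}(R_w\mu)\,\mathrm{d}P(\mu)$ (valid for each $w$ by the invariance just established) over $w \in M$ and invoking Tonelli's theorem, I get $P(\mathcal{E}) = 0$, which is the assertion of the Proposition. The conceptual crux --- and the main obstacle --- is precisely this passage from the generic projection of Besicovitch--Federer to the single projection $\pi$: it is made possible only by the rotational symmetry of $P$, and setting that symmetry up correctly (in particular the compatibility of the normalisation $\sigma^*$ with rotations) is the delicate point. The remaining ingredients --- the transfer of unrectifiability from $\mu$ to $E$ via the density bounds, and the joint measurability of $(w,\mu) \mapsto \mathbf{1}_{\mathcal{E}}(R_w\mu)$ needed for Tonelli --- are routine.
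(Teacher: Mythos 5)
Your proposal is correct and follows essentially the same route as the paper: pure $(n-m)$-unrectifiability plus the density bounds yield purely unrectifiable carriers of finite $\mathcal{H}^{n-m}$-measure, Besicovitch--Federer kills almost every projection, and the $M$-invariance of $\BMS$ combined with Fubini/Tonelli upgrades this to the fixed plane $U$. The only cosmetic difference is that the paper first extracts a single good plane $U_0$ valid for almost every $x$ and then rotates it, whereas you average directly over $M$ at the level of the distribution $P$; and the paper invokes Lemma~\ref{lemma.lset} for the finite-Hausdorff-measure decomposition where you rederive it from the density bounds of Lemma~\ref{lemma.collect}.
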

Recall that a measure $\mu$ is singular with respect to a measure $\nu$ if it gives full measure to a $\nu$-negligible set.

\begin{proof}

  For any $m$-plane $V$, denote by $\pi_V$ the canonical projection $N \to N/V$.

  We will show that there exists an $m$-plane $U_0$ such that for almost every $x$, the push-forward of $\sigma(x)$ through $N \to N/U_0$ is singular with respect to the Lebesgue measure on $N/U$. Since the BMS measure is $M$-invariant, this implies that the same statement holds for any other $m$-plane $U$.

  According to Lemma \ref{lemma.lset} and the previous Propostion, for $\BMS$-almost every $x$ there is a sequence of Borel sets $(A_k)_k$ such that
  \begin{itemize}
  \item $\cup_k A_k$ has full $\sigma(x)$-measure,
  \item  each $A_k$ has finite $(n-m)$-dimensional Hausdorff measure,
  \item and each $A_k$ is purely $(n-m)$-unrectifiable.
  \end{itemize}

  By virtue of the Besicovitch-Federer projection theorem (\cite{Mattila}, Theorem 18.1 (2)), the image of $\cup_k A_k$ in $N/V$ is Lebesgue-negligible for almost every $m$-plane $V$ (with respect to the Haar measure on the Grassmannian of $m$-planes in $N$). This shows that for almost every $m$-plane $V$, the push-forward of $\sigma(x)$ through $\pi_V$ is singular with respect to the Lebesgue measure.

  This holds for almost every $x$. A standard application of Fubini's theorem now yields that there exists an $m$-plane $U_0$ such that for almost every $x$, the push-forward of $\sigma(x)$ through $\pi_{U_0}$ is singular with respect to the Lebesgue measure. The proposition is thus proved.
 \end{proof}
\begin{lemma} Assume that $\Gamma$ is convex-cocompact. For $\BMS$-almost every $x \in \Gamma \backslash G$, $\sigma(x)$ is supported by a countable union of $\delta_\Gamma$-sets. \label{lemma.lset}
\end{lemma}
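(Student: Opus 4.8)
The plan is to read ``$\delta_\Gamma$-set'' in the sense of Mattila, i.e. a Borel set $A \subset N$ with $0 < \mathcal{H}^{\delta_\Gamma}(A) < \infty$, where $\mathcal{H}^{\delta_\Gamma}$ denotes the $\delta_\Gamma$-dimensional Hausdorff measure on $N \simeq \R^n$. The whole statement is then essentially a repackaging of the two-sided density estimate of Lemma \ref{lemma.collect}, item 6, through the classical comparison between the upper and lower densities of a Radon measure and the Hausdorff measure of the level sets on which these densities are controlled. So the first thing I would do is fix a Borel set of full $\BMS$-measure on which item 6 holds, pick such an $x$, and work entirely with the single Radon measure $\sigma(x)$ on $N$.

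Next I would slice the support of $\sigma(x)$ according to uniform, rather than merely asymptotic, density bounds. For positive integers $j,k$ set
\[ A_{j,k} = \Bigl\{\, h \in N\ :\ \tfrac{1}{k}\,\rho^{\delta_\Gamma} \le \sigma(x)\bigl(B(h,\rho)\bigr) \le k\,\rho^{\delta_\Gamma}\ \text{ for all } 0 < \rho \le \tfrac{1}{j}\,\Bigr\}. \]
Each $A_{j,k}$ is Borel, since $h \mapsto \sigma(x)(B(h,\rho))$ is semicontinuous and it suffices to test the inequalities on rational $\rho$. By Lemma \ref{lemma.collect}, item 6, for $\sigma(x)$-almost every $h$ one has $0 < \liminf_{\rho \to 0} \sigma(x)(B(h,\rho))/\rho^{\delta_\Gamma}$ and $\limsup_{\rho \to 0} \sigma(x)(B(h,\rho))/\rho^{\delta_\Gamma} < \infty$, so such an $h$ lies in $A_{j,k}$ for suitable $j,k$; hence $\bigcup_{j,k} A_{j,k}$ has full $\sigma(x)$-measure.

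I would then invoke the standard density-to-Hausdorff comparison (\cite{Mattila}, Theorem 6.9). On $A_{j,k}$ the upper density is at most $k$, which gives $\mathcal{H}^{\delta_\Gamma}(A_{j,k}) \ge c\,k^{-1}\sigma(x)(A_{j,k})$; in particular the Hausdorff measure is positive as soon as $\sigma(x)(A_{j,k}) > 0$. Conversely the lower density is at least $1/k$ on $A_{j,k}$, which forces $\mathcal{H}^{\delta_\Gamma}(E) \le C\,k\,\sigma(x)(E)$ for every Borel $E \subset A_{j,k}$. Since $\sigma(x)$ is a Radon measure it is finite on bounded sets, so intersecting with the balls $B(0,R)$, $R \in \mathbf{N}$, yields $\mathcal{H}^{\delta_\Gamma}\bigl(A_{j,k} \cap B(0,R)\bigr) < \infty$. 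The countable family $\bigl(A_{j,k} \cap B(0,R)\bigr)_{j,k,R}$ therefore consists of $\delta_\Gamma$-sets, after discarding the $\sigma(x)$-null pieces, which do not affect the support, and its union carries $\sigma(x)$ almost entirely. Running this for every $x$ in the full-measure set fixed at the outset gives the lemma.

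The only genuine points of care, rather than real obstacles, are two. First, one must not assume that $\mathrm{supp}\,\sigma(x)$ is bounded: the self-similarity $\sigma(x a_t) = S_t \sigma(x)$ of Lemma \ref{lemma.collect} makes these conditional measures scale-invariant in distribution, so their support is typically unbounded, and this is exactly why the localization to balls $B(0,R)$ is needed to pass from $\sigma$-finiteness to genuine finiteness of the Hausdorff measure on each piece. Second, one must keep track of the direction of the two inequalities in Theorem 6.9, using the upper bound on the density to produce positivity and the lower bound to produce finiteness of $\mathcal{H}^{\delta_\Gamma}$.
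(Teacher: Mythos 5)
Your proof is correct, but it takes a genuinely different route from the paper. The paper's argument is two lines: by Sullivan's theorem (\cite{Sullivan1}, Theorem 7) the limit set $\Lambda_\Gamma$ of a convex-cocompact group is itself a $\delta_\Gamma$-set, and the support of $\sigma(x)$ is almost surely locally bilipschitz-equivalent to $\Lambda_\Gamma$ (via the parametrization $\phi_g$ of the horosphere), so covering it by countably many bounded charts immediately exhibits it as a countable union of $\delta_\Gamma$-sets. You instead start from the pointwise two-sided density bounds of Lemma \ref{lemma.collect}.6, exhaust a full-measure set by the pieces $A_{j,k}\cap B(0,R)$ on which the density bounds are uniform, and convert those bounds into positivity and finiteness of $\mathcal{H}^{\delta_\Gamma}$ via the standard comparison theorem (\cite{Mattila}, Theorem 6.9); your handling of the two directions of that comparison and of the unboundedness of the support is right. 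The two arguments are cousins rather than strangers: the global statement that $\Lambda_\Gamma$ is a $\delta_\Gamma$-set and the pointwise density estimate of Lemma \ref{lemma.collect}.6 both come from the shadow lemma for convex-cocompact groups (\cite{Coornaert}), so convex-cocompactness enters in the same way. The paper's route is shorter given the external reference and gives the slightly stronger conclusion that the entire topological support is such a union; yours is self-contained modulo Lemma \ref{lemma.collect}.6, produces directly the ``full $\sigma(x)$-measure'' form that is the one actually invoked in Proposition \ref{prop.singular}, and yields as a bonus that on each piece $\sigma(x)$ and $\mathcal{H}^{\delta_\Gamma}$ are mutually comparable with uniform constants.
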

Recall that $E$ is a $\delta$-set if its $\delta$-dimensional Hausdorff measure is finite and non-zero.
\begin{proof}
It is well-known (see \cite{Sullivan1}, Theorem 7) that the limit set $\Lambda_\Gamma$ is a $\delta_\Gamma$-set. Since it is (almost surely) locally bilipschitz-equivalent to the support of $\sigma(x)$, the lemma follows.
\end{proof}

\subsection{Conditional measure  of  strips}
If $U$ is any $m$-plane in $N$ ($1 \leq m \leq n-1$), we denote by $B_\rho^T(U)$ the $\rho$-neighbourhood of $U$ in $N$, that is the set of all $h \in N$ such that
\[ d(h,U) < \rho \text. \]
When it is clear from the context which $m$-plane we are talking about, we dispense ourselves with the letter $U$ in the notation.

\begin{proposition} \label{prop.stripinfinite}
  Assume that $\Gamma$ is convex-cocompact and Zariski-dense and that  $\delta_\Gamma = n-m$ where $m$ is an integer, $1 \leq m \leq n-1$. Fix some $m$-plane $U$ in $N$. For $\BMS$-almost every $x \in \Gamma \backslash G$ and any $\rho > 0$,
  \[ \sigma(x)(B_\rho^T) = \infty \text.\]
\end{proposition}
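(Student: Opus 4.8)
The plan is to argue by contradiction: I would show that the finiteness of a \emph{single} strip propagates, by self-similarity, to the statement that the transversal push-forward of $\sigma(x)$ to $N/U$ is a genuine Radon measure, and that the existence of such a measure is incompatible with the singularity established in Proposition~\ref{prop.singular}. For $\rho>0$ set $E_\rho=\{x\ ;\ \sigma(x)(B_\rho^T)<\infty\}$; this is a well-defined event since finiteness is insensitive to the projective normalization of $\sigma(x)$. As $B_\rho^T$ increases with $\rho$, the family $(E_\rho)_\rho$ is decreasing and $E=\bigcup_{\rho>0}E_\rho=\bigcup_k E_{1/k}$ is the set where \emph{some} strip has finite conditional measure. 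I assume, for contradiction, that $E$ has positive $\BMS$-measure; then some $E_{\rho_0}$ has positive measure.

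The engine is the scaling relation of Lemma~\ref{lemma.collect}(4), $\sigma(xa_t)=S_t\sigma(x)$. Because $U$ is a linear subspace one has $S_t(B_\rho^T)=B_{e^t\rho}^T$, whence
\[ \sigma(xa_t)(B_\rho^T)=\bigl(S_t\sigma(x)\bigr)(B_\rho^T)=\sigma(x)(B_{e^{-t}\rho}^T). \]
This identity shows at once that $E$ is invariant under the flow $(a_t)$. Now I would apply Poincaré recurrence to the finite $(a_t)$-invariant measure $\BMS$ and the set $E_{\rho_0}$: for almost every $x\in E_{\rho_0}$ there are times $t_k\to+\infty$ with $xa_{-t_k}\in E_{\rho_0}$, so that $\sigma(x)(B_{e^{t_k}\rho_0}^T)=\sigma(xa_{-t_k})(B_{\rho_0}^T)<\infty$ for all $k$; by monotonicity in $\rho$ this forces $\sigma(x)(B_R^T)<\infty$ for \emph{every} $R>0$. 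Hence the set $E_\infty=\{x\ ;\ \sigma(x)(B_R^T)<\infty\ \forall R\}$ has positive measure, and the displayed identity shows $E_\infty$ is flow-invariant, so by ergodicity of $\BMS$ it has full measure.

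Thus, under the contradiction hypothesis, for almost every $x$ the push-forward $\bar\sigma(x)=(\pi_U)_*\sigma(x)$ through the canonical projection $\pi_U:N\to N/U$ is a genuine Radon measure, since $\bar\sigma(x)(\bar B_R)=\sigma(x)(\pi_U^{-1}\bar B_R)=\sigma(x)(B_R^T)<\infty$ for every ball $\bar B_R\subset N/U$. At this point I would invoke the dimension-conservation estimates of \cite{preprintOM}: the transversal measures $\bar\sigma(x)$ retain dimension $\delta_\Gamma=n-m=\dim(N/U)$, with finite upper $(n-m)$-density $\bar\sigma(x)$-almost everywhere. By the density comparison theorem (\cite{Mattila}, Theorem~2.12), a Radon measure on $\R^{n-m}$ with almost-everywhere finite upper $(n-m)$-density is absolutely continuous with respect to Lebesgue measure; so $\bar\sigma(x)\ll\mathcal{L}^{n-m}$. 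Moreover $\bar\sigma(x)$ is nonzero, since the origin of $N$ lies in the support of $\sigma(x)$ by Lemma~\ref{lemma.collect}(3) and $0\in U\subset B_R^T$.

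This contradicts Proposition~\ref{prop.singular}, which asserts that $\bar\sigma(x)$ is singular with respect to Lebesgue measure for almost every $x$: a nonzero measure cannot be simultaneously absolutely continuous and singular. Therefore $E$ is $\BMS$-null, i.e. $\sigma(x)(B_\rho^T)=\infty$ for every $\rho>0$ almost surely, which is the assertion. I expect the genuine difficulty to lie entirely in the transversal-dimension step: namely that projecting $\sigma(x)$ along $U$ does not drop its dimension below $n-m$, equivalently that the upper $(n-m)$-density of $\bar\sigma(x)$ is finite almost everywhere. This is the Marstrand-type input imported from \cite{preprintOM} and is where the exact balance $\delta_\Gamma=n-m$ is used; by comparison, the self-similar propagation of finiteness to all scales and the final absolute-continuity/singularity clash are routine.
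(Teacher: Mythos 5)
Your overall architecture matches the paper's: argue by contradiction, use the scaling relation $\sigma(xa_t)=S_t\sigma(x)$ together with recurrence/ergodicity of the flow to upgrade finiteness of one strip to finiteness of all strips for almost every $x$, conclude that $(\pi_U)_*\sigma(x)$ is a genuine Radon measure on $N/U$, and then contradict Proposition~\ref{prop.singular}. The first half of your argument is sound and is essentially what the paper does (the paper invokes mixing and Lemma~\ref{lemma.collect}.4 where you invoke Poincar\'e recurrence and ergodicity; same content).

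The gap is in the step you yourself flag as the crux. You pass from ``$\bar\sigma(x)$ has dimension $n-m$'' to ``$\bar\sigma(x)$ has finite upper $(n-m)$-density almost everywhere,'' and then apply the density comparison theorem to get absolute continuity. But what \cite{preprintOM} (Theorem 4.1) provides is only the \emph{logarithmic} local dimension, $\lim_{\rho\to 0}\log\bar\sigma(x)(\bar B(\bar h,\rho))/\log\rho=n-m$, and exact dimension $d$ on $\R^{d}$ does \emph{not} imply $\limsup_{\rho\to 0}\bar\sigma(x)(\bar B(\bar h,\rho))/\rho^{d}<\infty$: there are singular measures on $\R^{d}$ of exact dimension $d$ (already on $\R$, e.g.\ measures with $\mu(B(x,\rho))\asymp\rho\log(1/\rho)^{-1}$-type behaviour, or Bernoulli convolution--style examples). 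Nor can finite upper density be extracted from the Ahlfors regularity of $\sigma(x)$ itself, since $\pi_U^{-1}$ of a small ball is an unbounded strip. So the implication ``full dimension $\Rightarrow$ absolutely continuous'' is exactly the nontrivial rigidity that needs proving. The paper supplies it by a dynamical argument: the transversal distribution $P^T=\int\mathrm{d}\BMS(x)\,\mathrm{Dirac}((\pi_U\sigma(x))^*)$ is an Ergodic Fractal Distribution of dimension $n-m$ on the $(n-m)$-dimensional space $N/U$, and Hochman's rigidity theorem (\cite{Hoch}, Proposition 6.4, in the spirit of Ledrappier--Young) forces such a distribution to be $\mathrm{Dirac}(\mathrm{Haar}_{N/U})$. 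That scaling-and-translation invariance of the whole family of transversal measures, not a pointwise density estimate for an individual measure, is what closes the argument; without it your proof does not go through.
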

\begin{proof}
  It is enough to show that for any $\rho >0$, and almost every $x \in \Gamma \backslash G$, $\sigma(x)(B_\rho^T) = \infty$ (see lemma \ref{lemma.collect}). We argue by contradiction and assume that the set of those $x$ such that
  \[ \sigma(x)(B_\rho^T) < \infty \]
  has positive BMS measure; it must then have full measure since $\BMS$ is mixing and because of Lemma \ref{lemma.collect}.4. 

  It is easy to see then that for $\BMS$-almost every $x \in \Gamma \backslash G$,  \[ \sigma(x)(B_\rho^T) < \infty \]
  for any $\rho > 0$.

  This implies that  the push-forward of $\sigma(x)$ through the projection $\pi_U: N \to N/U$ is a projective \emph{Radon} measure.

  Now consider the distribution
  \[ P^T = \int \mathrm{d}m(x)\ \mathrm{Dirac}((\pi_U \sigma(x))^*) \]
  on the space of Radon measures on $N/U$.
  It is straight-forward to check that $P^T$ is an Ergodic Fractal Distribution (see \cite{preprintOM}, Lemma 5.3). Since $P^T$ has dimension $n-m$ (see \cite{preprintOM}, Theorem 4.1) this is possible only if
  \[ P^T = \mathrm{Dirac}(\mathrm{Haar}_{N/U}) \]
  \emph{i.e.} $P^T$ is the Dirac mass at the Haar measure of $N/U$.

  We are using the fact that a Fractal Distribution of dimension $d$ on some Euclidean space $\R^d$ has to be the only one we can think of, \emph{i.e.} $\mathrm{Dirac}(\mathrm{Haar}_{\R^d})$. In essence, this fact goes back to Ledrappier-Young (\cite{LY}, Corollary G). In the setting of Fractal Distributions it was proved by Hochman in \cite{Hoch}, Proposition 6.4 (see also \cite{MargulisTomanov}).

Now we end up with the conclusion that for $\BMS$-almost every $x \in \Gamma \backslash G$, the push-forward of $\sigma(x)$ through $\pi_U$ is the Haar measure on $N/U$; this contradicts Proposition \ref{prop.singular}. Hence the proposition is proved.
\end{proof}
\begin{remark}
  Propositions \ref{prop.unrectifiable}, \ref{prop.singular} and \ref{prop.stripinfinite} admit obvious counter-examples when $\Gamma$ is not Zariski-dense: take some lattice $\Gamma \subset \mathbf{SO}^o(1,m+1)$ and look at the image of $\Gamma$ through the embedding
  \[ \mathbf{SO}^o(1,m+1) \to \mathbf{SO}^o(1,n+1) \text. \]
\end{remark}

\subsection{Recurrence of the Burger-Roblin measure}
We are now ready to prove our main theorem. We use the following consequence of proposition \ref{prop.stripinfinite}.

\begin{lemma} \label{lemma.base}
Assume that $\Gamma$ is Zariski-dense and convex-cocompact and thatand $\delta_\Gamma=n-m$. Fix an $m$-plane $U$ in $N$. Let $\Omega_\Gamma$ be the support of the Bowen-Margulis-Sullivan measure in $\Gamma \backslash G$. For almost every $x \in \Gamma \backslash G$, and any $\rho > 0$, the set of all $h \in B^T_\rho(U)$ such that $xh \in \Omega_\Gamma$ is unbounded.
\end{lemma}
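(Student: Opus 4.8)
The plan is to deduce the statement directly from Proposition \ref{prop.stripinfinite} by playing the infinite total mass of $\sigma(x)$ on the strip against the fact that a Radon measure is finite on compact sets. Throughout I read ``almost every'' with respect to $\BMS$, which is the natural domain of $\sigma$; this is enough for the main theorem, where the shift coming from writing a $\BMS$-generic point as $x_0h_0$ is absorbed into a slightly larger strip before projecting onto $U$.

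The first step is to locate the support of $\sigma(x)$. Recall that $\sigma(x)$ is, up to scale, the pull-back through $\phi_g$ of the conditional measure $m_{gN}$ obtained by disintegrating $\tilde{m}_{\mathrm{BMS}}$ along $N$. Since $\BMS$ gives no mass to the open set $\Gamma\backslash G\setminus\Omega_\Gamma$, for $\BMS$-almost every $x$ the corresponding conditional gives no mass to it either, so its support is contained in $\Omega_\Gamma$; pulling back through $\phi_g$ (with $x=\Gamma g$) yields
\[ \mathrm{supp}(\sigma(x))\subseteq\{\,h\in N\ ;\ xh\in\Omega_\Gamma\,\}. \]
In particular each $h$ in the support of $\sigma(x)$ satisfies $xh\in\Omega_\Gamma$.

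The second step combines this with Proposition \ref{prop.stripinfinite}. Intersecting the two full-$\BMS$-measure sets, I fix $x$ for which both $\mathrm{supp}(\sigma(x))\subseteq\{h:xh\in\Omega_\Gamma\}$ and $\sigma(x)(B_\rho^T(U))=\infty$ for every $\rho>0$; I then fix $\rho$ and argue by contradiction. If $\{h\in B_\rho^T(U):xh\in\Omega_\Gamma\}$ were bounded, then the smaller set $\mathrm{supp}(\sigma(x))\cap B_\rho^T(U)$ would lie in some closed ball $\overline{B}(0,R)$; as $\sigma(x)$ puts no mass outside its support,
\[ \sigma(x)(B_\rho^T(U))=\sigma(x)\bigl(B_\rho^T(U)\cap\overline{B}(0,R)\bigr)\leq\sigma(x)(\overline{B}(0,R))<\infty, \]
the last inequality because $\sigma(x)$ is Radon and $\overline{B}(0,R)$ is compact in $N\simeq\R^n$. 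This contradicts $\sigma(x)(B_\rho^T(U))=\infty$, so the set is unbounded; since the two full-measure sets do not depend on $\rho$, the conclusion holds simultaneously for all $\rho>0$.

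The argument is short, and the only delicate point is the support inclusion of the first step — the general fact that the pieces of a disintegration are almost surely supported inside the support of the ambient measure. This is also where convex-cocompactness enters: it makes $\Omega_\Gamma$ compact, so that ``unbounded in $N$'' is exactly the input needed to build, after orthogonal projection onto $U$, the escaping return sequence used to prove recurrence of $m_{\BR}$.
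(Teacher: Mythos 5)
Your argument is correct and is essentially the paper's own proof: both identify the support of $\sigma(x)$ with (a subset of) $\{h \in N : xh \in \Omega_\Gamma\}$ and then use that a Radon measure assigning infinite mass to the strip $B_\rho^T(U)$ forces its support to meet the strip in an unbounded set. The only cosmetic difference is that you prove and use just the inclusion $\mathrm{supp}(\sigma(x)) \subseteq \{h : xh \in \Omega_\Gamma\}$, which is all that is needed, while the paper asserts equality.
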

\begin{proof}
  By construction of the disintegration mapping $\sigma$, the support of $\sigma(x)$, $\mathrm{supp}(\sigma(x))$, is almost surely the set of all $h \in N$ such that $xh$ belongs to $\Omega_\Gamma$.  Since the \emph{Radon} measure $\sigma(x)$ gives infinite measure to $B^T_\rho(U)$, the intersection $B^T_\rho \cap \mathrm{supp}(\sigma(x))$ must be unbounded; hence the lemma.
\end{proof}

\begin{proposition}
  Assume that $\Gamma$ is Zariski-dense and convex-cocompact and that $\delta_\Gamma=n-m$. Fix some $m$-plane $U$ in $N$. For BMS-almost every $x$, there is a compact $K \subset \Gamma \backslash G$ such that
  \[ \int_U \mathbf{1}_K(xu) \mathrm{d}u = \infty \text. \]
  Furthermore, if $W$ is any neighbourhood of $\Omega_\Gamma$, $K$ may be chosen inside $W$.
\end{proposition}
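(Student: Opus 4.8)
The plan is to carry out exactly the reduction sketched in the introduction: it suffices to produce, for almost every $x$, a sequence $(u_k)$ in $U$ with $\|u_k\| \to \infty$ whose orbit points $xu_k$ all lie in one fixed compact set, and then to \emph{fatten} this discrete orbit into a subset of $U$ of infinite Haar measure. First I would fix, once and for all, a Borel set of full $\BMS$-measure on which the conclusion of Lemma \ref{lemma.base} holds simultaneously for every $\rho$ in the countable family $\{1/j\ ;\ j \geq 1\}$; since the strips $B^T_\rho(U)$ increase with $\rho$, this yields, for almost every $x$ and \emph{every} $\rho>0$, an unbounded set of $h \in B^T_\rho(U)$ with $xh \in \Omega_\Gamma$. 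Fix such an $x$ and some $\rho>0$, choose $h_k$ in this set with $\|h_k\| \to \infty$, and write $h_k = u_k + w_k$ where $u_k$ is the orthogonal projection of $h_k$ onto $U$ and $w_k \in U^\perp$ with $\|w_k\| < \rho$ (using that $N \simeq \R^n$ is abelian and Euclidean). Then $\|u_k\|^2 = \|h_k\|^2 - \|w_k\|^2 \geq \|h_k\|^2 - \rho^2 \to \infty$, so $\|u_k\| \to \infty$. Set $K = \Omega_\Gamma\, \overline{B(0,2\rho)}$, which is compact because $\Omega_\Gamma$ is compact (here convex-cocompactness enters) and the $N$-action is continuous.

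The key computation is that the entire ball $B_U(u_k,\rho) = \{u \in U\ ;\ \|u - u_k\| < \rho\}$ maps into $K$: for such a $u$ one has $xu = (xh_k)(u - h_k)$ with $\|u-h_k\| \leq \|u-u_k\| + \|w_k\| < 2\rho$, hence $xu \in \Omega_\Gamma\, B(0,2\rho) \subset K$. To convert these overlapping balls into an infinite amount of measure, I would extract a $\rho$-separated subsequence. Since $\|u_k\| \to \infty$, every bounded subset of $U$ contains only finitely many of the $u_k$, so a greedy selection produces indices $(k_j)$ with $\|u_{k_i} - u_{k_j}\| \geq \rho$ for $i \neq j$: at each step the finite union $\bigcup_{i \leq j} B_U(u_{k_i},\rho)$ is bounded, hence misses all but finitely many $u_l$, and any missed index serves as $k_{j+1}$. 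The balls $B_U(u_{k_j}, \rho/2)$ are then pairwise disjoint, each of Lebesgue measure $c_m (\rho/2)^m > 0$, and each contained in $\{u \in U\ ;\ xu \in K\}$; summing their measures gives $\int_U \mathbf{1}_K(xu)\,\mathrm{d}u = \infty$.

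For the refinement, given a neighbourhood $W$ of $\Omega_\Gamma$ I would simply choose $\rho$ small enough that $\Omega_\Gamma\, \overline{B(0,2\rho)} \subset W$, which is possible since $\Omega_\Gamma$ is compact and the action map is continuous; the construction above then produces $K \subset W$. I expect the only genuine obstacle to be the passage from the discrete orbit $(xu_k)$ to a set of infinite Haar measure, that is, the interplay of the fattening step (governed by continuity of the $N$-action and compactness of $\Omega_\Gamma$) with the extraction of a $\rho$-separated subsequence (governed by $\|u_k\| \to \infty$). Everything else is bookkeeping around the orthogonal decomposition $N = U \oplus U^\perp$ and the fact that $N$ is abelian, which is what allows the neighbourhood $B^T_\rho(U)$ supplied by Lemma \ref{lemma.base} to be traded for a genuine $U$-integral.
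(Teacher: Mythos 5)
Your argument is correct, and its skeleton is exactly the paper's: use Lemma \ref{lemma.base} to produce an unbounded sequence $(h_k)$ in the strip $B^T_\rho(U)$ with $xh_k \in \Omega_\Gamma$, project orthogonally onto $U$ to get $u_k \to \infty$ with $xu_k$ in a small thickening of $\Omega_\Gamma$, then fatten and disjointify to produce infinite Haar measure. The one genuine difference is in the last step: the paper delegates it to Lemma \ref{lemma.general}, a general statement about a second countable locally compact group acting on a locally compact space, whose proof requires choosing a proper right-invariant metric on the group and a uniform-continuity argument to bound the thickening radii $\varepsilon_n$ from below before extracting disjoint balls. You instead inline this step using the concrete Euclidean structure of $N \simeq \R^n$: the estimate $\|u - h_k\| \leq \|u-u_k\| + \|w_k\| < 2\rho$ replaces the uniform-continuity argument, and the greedy extraction of a $\rho$-separated subsequence replaces the passage to pairwise disjoint balls in the abstract group. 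What the paper's route buys is generality (the lemma is stated once for arbitrary $G$, as the author notes explicitly); what yours buys is a shorter, self-contained, and fully explicit computation in the only case actually needed, with the pleasant bonus that the Pythagorean identity $\|u_k\|^2 = \|h_k\|^2 - \|w_k\|^2$ makes the divergence of $(u_k)$ immediate. Your extra care in making the full-measure set of Lemma \ref{lemma.base} uniform over a countable family of $\rho$'s is harmless but not strictly necessary, since the lemma already asserts its conclusion for almost every $x$ and \emph{every} $\rho>0$ simultaneously.
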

Of course $U$ is endowed with the Haar measur in this formula.
\begin{proof}
  First of all, recall that $\Omega_\Gamma$ is a compact subset of $\Gamma \backslash G$ since $\Gamma$ is convex-cocompact.

  For any $\rho>0$, let $K_\rho$ be the set of all $xh$ where $x \in \Omega_\Gamma$ and $h$ belongs to the closed $\rho$-ball centered at the origin in $N$. This is again a compact set. If $\rho$ is small enough, $K_\rho$ is  a subset of $W$. Fix such a $\rho$.

  By lemma \ref{lemma.base}, we may find a sequence $(h_k)_k$ of elements of $B_\rho^T(U)$ that goes to infinity and such that $xh_k \in \Omega_\Gamma$ for any $k$; if we let $h_k=u_k v_k$ where $u_k \in U$ and $v_k$ is orthogonal to $U$, we have
  \[ x u_k \in K_\rho \]
  for any $k$, and the sequence $(u_k)_k$ goes to infinity.

  According to lemma \ref{lemma.general}, we may thicken $K_\rho$ to get a compact set $K \subset W$, such that the conclusion of the proposition holds.
\end{proof}
\begin{remark}
  It is necessary to consider a compact set $K$ that is slightly bigger than $\Omega_\Gamma$ in this lemma, since by virtue of Proposition \ref{prop.unrectifiable}, one has
  \[ \int_U \mathbf{1}_{\Omega_\Gamma} (xu) \mathrm{d}u = 0 \]
  for BMS-almost every $x$.
\end{remark}

\begin{corollary}\label{cor.principal}
  Under the same hypothesis, for BR-almost every $x$ there is a compact $K$ such that
  \[ \int_U \mathbf{1}_K (xu) \mathrm{d}u = \infty \text. \]
  In particular, the BR measure is recurrent with respect to $U$.
\end{corollary}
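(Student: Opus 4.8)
The plan is to transfer the preceding Proposition, which holds for $\BMS$-almost every point, into a statement valid for $\BR$-almost every point, and then to read off recurrence from the resulting sojourn-time estimate. Let $\Omega \subset \Gamma \backslash G$ be the set of those $x$ for which the Proposition produces a compact set $K$ with $\int_U \mathbf{1}_K(xu)\,\mathrm{d}u = \infty$; by hypothesis $\Omega$ has full $\BMS$-measure. By the third assertion of the Lemma in the introduction, the saturated set $\Omega N$ then has full $\BR$-measure, so it is enough to establish the conclusion for every $y \in \Omega N$.

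Fix such a $y$ and choose a decomposition $y = x h_0$ with $x \in \Omega$ and $h_0 \in N$. The key point is that $N$ is abelian (being isomorphic to $\R^n$), so that $h_0$ commutes with every $u \in U \subset N$; hence
\[ y u = x h_0 u = x u h_0 = (xu) h_0 \qquad (u \in U) \text. \]
Let $K$ be the compact set attached to $x$ by the Proposition and set $K' = K h_0$, which is again compact since right translation by $h_0$ is a homeomorphism of $\Gamma \backslash G$. Then $\mathbf{1}_{K'}(yu) = \mathbf{1}_K(xu)$ for all $u \in U$, so that
\[ \int_U \mathbf{1}_{K'}(yu)\,\mathrm{d}u = \int_U \mathbf{1}_K(xu)\,\mathrm{d}u = \infty \text. \]
This proves the first assertion for $\BR$-almost every $y$; note that we only use the existence of \emph{some} decomposition $y = x h_0$, so no measurable choice of $h_0$ is needed.

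It remains to deduce $U$-recurrence of $m_\BR$. Recall that $m_\BR$ is $N$-invariant, hence $U$-invariant, and that, being a Radon measure, it gives finite mass to every compact set. Consider the Hopf decomposition of $\Gamma \backslash G$ into its conservative and dissipative parts for the measure-preserving action of $U \simeq \R^m$. On the dissipative part one has $\int_U \mathbf{1}_{K'}(yu)\,\mathrm{d}u < \infty$ for almost every $y$ and every compact set $K'$; this is the standard description of the dissipative part for a $U$-invariant Radon measure. Since we have just exhibited, for $\BR$-almost every $y$, a compact set with infinite sojourn integral, the dissipative part must be $\BR$-null, and therefore $m_\BR$ is conservative, that is, recurrent, with respect to $U$. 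The argument is soft once the Proposition is available; the only step demanding some care is this passage from the sojourn-time estimate to conservativity, where one appeals to the Hopf decomposition for the infinite (but $\sigma$-finite and $U$-invariant) measure $m_\BR$.
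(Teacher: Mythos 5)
Your proof is correct and follows essentially the same route as the paper: the paper's one-line argument observes that the set of good points is $N$-invariant (which is exactly your commutativity computation $yu=(xu)h_0$) and then invokes part 3 of the introductory lemma to pass from full BMS measure to full BR measure. Your added justification of the final recurrence claim via the Hopf decomposition is a correct spelling-out of what the paper leaves implicit.
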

\begin{proof}
  The set of all $x \in \Gamma \backslash G$ that satisfy the conclusion is obviously $N$-invariant; since it has full BMS measure, it must have full BR measure as well.
\end{proof}

The following lemma is well-known but I have not been able to pinpoint a proof in the literature. We need it only when $G$ is some $\R^m$ but there is no reason not to prove it in full generality.
\begin{lemma} \label{lemma.general}
  Let $X$ be some second countable locally compact space where a second countable locally compact topological group $G$ acts continuously. Assume that we are given some  fixed $x_0 \in X$ and a sequence $(g_n)_n$ in $G$ that goes to infinity, such that $g_n x_0$ belongs to a fixed compact subset $K$ for every $n$. Then for any neighbourhood $W$ of $K$, there is a compact subset $L$ of $W$ such that
  \[ \int \mathbf{1}_K (gx_0)\ \mathrm{d}g = \infty \text. \]
\end{lemma}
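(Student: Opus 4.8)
The plan is to thicken $K$ by sliding it along a fixed small neighbourhood of the identity, so that an entire neighbourhood of each $g_n$ carries $x_0$ into the thickened set; because $(g_n)_n$ escapes every compact set, these neighbourhoods can be arranged to be pairwise disjoint and hence carry infinite total Haar mass. (The characteristic function in the conclusion is of course meant to be that of the compact set $L$ we construct.)

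First I would use continuity of the action map $a : G \times X \to X$ together with the compactness of $K$ to choose a symmetric open neighbourhood $V$ of the identity, with compact closure, so small that $\overline{V} \cdot K \subseteq W$; since $a(\{e\} \times K)=K$ lies in the open set $W$, this is a routine tube-lemma argument. I then set $L = \overline{V} \cdot K$, which is compact (a continuous image of $\overline{V} \times K$), contains $K$, and is contained in $W$.

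Next, for every $n$ and every $g \in V g_n$, writing $g=v g_n$ with $v \in V$ gives $g x_0 = v \cdot (g_n x_0) \in V \cdot K \subseteq L$, because $g_n x_0 \in K$. Hence
\[ \{\, g \in G \ ;\ g x_0 \in L \,\} \supseteq \bigcup_n V g_n \text. \]
To bound the Haar measure of the right-hand side from below I would make the translates disjoint. Since $V$ is symmetric with relatively compact $VV$, an overlap $V g_n \cap V g_m \neq \emptyset$ forces $g_n \in \overline{VV}\, g_m$. Using that $(g_n)_n$ leaves every compact set, I extract inductively a subsequence $(g_{n_k})_k$ for which the $V g_{n_k}$ are pairwise disjoint: at the $k$-th stage the union of the finitely many forbidden sets $\overline{VV}\, g_{n_j}$, $j \leq k$, is compact, so some later $g_n$ avoids it.

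Finally I would integrate against right-invariant Haar measure, for which each right translate has the same positive mass $\mu(V g_{n_k}) = \mu(V)$. Pairwise disjointness then yields
\[ \int \mathbf{1}_L(g x_0)\, \mathrm{d}g \ \geq \ \mu\Big( \bigcup_k V g_{n_k} \Big) \ = \ \sum_k \mu(V) \ = \ \infty \text, \]
as claimed. The only genuinely delicate point is the choice of Haar measure: the decomposition $g = v g_n$ produces right translates $V g_n$, so equality of their masses requires the right-invariant measure — a subtlety that evaporates in the intended application, where $G \simeq \R^m$ is abelian and unimodular. The hypothesis that $(g_n)_n$ goes to infinity is exactly what supplies infinitely many disjoint translates; without it the union would be covered by finitely many $V g_n$ and would have finite mass.
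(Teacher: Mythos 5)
Your proof is correct, and it follows the same overall strategy as the paper's (thicken $K$ inside $W$, surround each $g_n$ by a uniform-size neighbourhood whose elements carry $x_0$ into the thickened set, extract pairwise disjoint such neighbourhoods using that $(g_n)_n$ leaves every compact set, and conclude by right-invariance of the Haar measure), but the implementation is genuinely different and arguably cleaner. The paper first invokes Struble's theorem to equip $G$ with a compatible \emph{proper right-invariant metric}, takes $L$ to be a closed $\delta$-neighbourhood of $K$ in a metric on $X$, and then runs a uniform-continuity argument on $(g,y) \mapsto d(y,gy)$ over $B_1 \times K$ to produce a uniform radius $\eta$ with $\rho_{x_0}(B(g_n,\eta)) \subseteq L$; the disjoint sets are then metric balls, equal in measure because the metric is right-invariant. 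You instead work purely topologically: the tube lemma gives a symmetric relatively compact identity neighbourhood $V$ with $\overline{V}\cdot K \subseteq W$, you set $L=\overline{V}\cdot K$, and the uniform neighbourhoods are the right translates $Vg_n$, whose equal mass $\mu(V)>0$ is immediate from right-invariance of the Haar measure; your disjointification via the compact sets $\overline{VV}\,g_{n_j}$ is also tidier than passing to a subsequence of metric balls. This avoids the metrization input entirely while proving the same general statement, and you correctly flag both the typo in the statement ($\mathbf{1}_K$ should be $\mathbf{1}_L$) and the point that right-invariance is the relevant choice of Haar measure, which the paper also emphasizes.
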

Here  $G$ is endowed with some \emph{right-invariant} Haar measure.
\begin{proof}
Endow $X$ with some compatible metric; endow $G$ with some compatible metric that is also right invariant and proper (which means that closed balls are compact), see \cite{Struble}. 

Fix some $\delta>0$ small enough that the set
\[ L = \{ x \in X\ ;\ d(x,K) \leq \delta \} \]
is compact and contained in $W$.

For any $n \geq 1$, let $\varepsilon_n$ be the lower bound of the set of all $\varepsilon >0$ for which the closed ball $B(g_n,\varepsilon)$ contains some $h$ such that $d(g_n x_0, hx_0)=\delta$. If there is no such $\varepsilon$, then the whole orbit $G x_0$ is contained in $L$ and the proof is over. We may thus assume that $\varepsilon_n < \infty$ for every $n$. It is clear also that $\varepsilon_n > 0$.

We now prove that $\inf_n \varepsilon_n > 0$. The mapping from $B_1 \times K$ to $\R$ (where $B_1$ is the closed unit ball in $G$)
\[ (g,y) \mapsto d(y,gy) \]
is uniformly continuous because it is continuous and $B_1 \times K$ is compact. In particular there is some $\eta \in ]0,1[$ such that the relation $d(g,e) < \eta$ (where $e$ is the unit of $G$) implies
\[ d(gy,y) < \delta \]
for any $y \in K$.

We are going to show that $\varepsilon_n \geq \eta$ for any $n$. Let $h$ be any element of $G$ such that $d(g_n,h) < \eta$; then $d(h g_n^{-1},e) < \eta$ (because the distance on $G$ is right-invariant) which implies
\[ d(h x_0, g_n x_0) = d(hg_n^{-1} g_n x_0,g_n x_0) < \delta \text. \]
By definition of $\varepsilon_n$, this is means that $\varepsilon_n \geq \eta$. Hence $\inf_n \varepsilon_n > 0$.

Now pick some positive $\varepsilon$ smaller than $\inf_n \varepsilon_n$. If $h \in G$ is such that $d(h,g_n) \leq \frac{\varepsilon}{2}$, then $d(hx_0,gx_0) < \delta$, so that $hx_0 \in L$.

This shows that the orbital mapping $\rho_{x_0}: g \mapsto gx_0$ maps each $B(g_n,\varepsilon/2)$ inside $L$. As $g$ goes to infinity in $G$, we may, passing to a subsequence, assume that these balls are pair-wise disjoint. Their union has infinite Haar measure because the metric on $G$ is right-invariant. Whence the lemma.
\end{proof}

\subsection{Return rate}

In the following proposition we let $B_R$ be the $R$-ball centered at the origin in $N$ and as previously $B_\rho^T$ is the $\rho$-neighbourhood of the $m$-plane $U$ in $N$.

We do not assume that $\Gamma$ is convex-cocompact nor that $\delta_\Gamma$ is an integer.
\begin{proposition}
  Assume that $\Gamma$ has finite BMS measure and is Zariski-dense. Let $m$ be some integer, $1 \leq m \leq n-1$. Fix an $m$-plane $U$ in $N$. For all $\rho>0$ and almost every $x \in \Gamma \backslash G$,
  \[ \liminf_{R \to \infty} \frac{\log \sigma(x)(B_\rho^T \cap B_R)}{\log R} \leq \sup\{ 0, \delta_\Gamma - (n-m) \} \text. \]
\end{proposition}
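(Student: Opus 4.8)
The plan is to isolate, inside the self-similar structure already encoded by the Ergodic Fractal Distribution $P$ of \eqref{eq.P}, the two competing exponents $\delta_\Gamma$ and $\min\{\delta_\Gamma,n-m\}$, and to extract their difference by a telescoping ergodic argument along the geodesic flow. For $R>0$ let $\nu_R$ be the image under the orthogonal projection $\pi_U\colon N\to N/U$ of the finite measure $\sigma(x)|_{B_R}$; it is a finite Borel measure on $N/U$ with $\nu_R(N/U)=\sigma(x)(B_R)$, and since $B_\rho^T=\pi_U^{-1}(B_\rho^{N/U})$, where $B_\rho^{N/U}$ is the ball of radius $\rho$ about the origin of $N/U$, one has $\sigma(x)(B_\rho^T\cap B_R)=\nu_R(B_\rho^{N/U})$. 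Because $P$ is an EFD of dimension $\delta_\Gamma$, the same flow/Birkhoff mechanism used below (its case $U=\{0\}$) gives $\log\sigma(x)(B_R)/\log R\to\delta_\Gamma$ for $\BMS$-almost every $x$. Writing
\[ \frac{\log\sigma(x)(B_\rho^T\cap B_R)}{\log R}=\frac{\log\sigma(x)(B_R)}{\log R}+\frac{\log\bigl(\nu_R(B_\rho^{N/U})/\sigma(x)(B_R)\bigr)}{\log R} \]
and using convergence of the first summand, the proposition reduces to
\[ \liminf_{R\to\infty}\frac{\log\bigl(\nu_R(B_\rho^{N/U})/\sigma(x)(B_R)\bigr)}{\log R}\le -\min\{\delta_\Gamma,n-m\}, \]
i.e.\ the relative transverse mass in the fixed $\rho$-ball decays, along a subsequence of scales, at least as fast as $R^{-\min\{\delta_\Gamma,n-m\}}$.

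To see this decay I would rescale and flow. Normalising $\nu_R$ and contracting $N/U$ by the factor $R$ produces a probability measure $\hat\nu_R$ on the closed unit ball of $N/U$ for which the relative mass above equals $\hat\nu_R(B_{\rho/R}^{N/U})$. By Lemma \ref{lemma.collect}.4 and the homothety equivariance \eqref{form.equivariance}, the family $(\hat\nu_R)_R$ is exactly the sequence of transverse marginals of the scenery of $\sigma(x)$ generated by the geodesic flow, each read at the scale matching its flow time. The crucial external input is the computation of the transverse dimension in \cite{preprintOM}, Theorem 4.1: projecting $P$ through $\pi_U$ yields an EFD of dimension $\min\{\delta_\Gamma,n-m\}$ (when $\delta_\Gamma\le n-m$ this equals $\delta_\Gamma$; when $\delta_\Gamma>n-m$ it saturates at $n-m$, the transverse marginals becoming Lebesgue-like, which is the Marstrand regime underlying the Mohammadi--Oh theorem).

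The decay of $\hat\nu_R(B_{\rho/R}^{N/U})$ is then obtained by telescoping one-scale decrements. I would let $\phi$ be the observable measuring the logarithmic loss of transverse mass under a single magnification by $e$, so that its integral against the projected EFD equals that EFD's dimension $\min\{\delta_\Gamma,n-m\}$, and apply Birkhoff's theorem to the $\BMS$-ergodic geodesic flow. The time-average of $\phi$ along the orbit of $x$ telescopes exactly into $-\tfrac{1}{\log R}\log\hat\nu_R(B_{\rho/R}^{N/U})$, up to a lower-order term coming from the fixed constant $\rho$, and therefore converges almost everywhere to $\min\{\delta_\Gamma,n-m\}$. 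Combined with $\log\sigma(x)(B_R)/\log R\to\delta_\Gamma$ and the identity $\delta_\Gamma-\min\{\delta_\Gamma,n-m\}=\sup\{0,\delta_\Gamma-(n-m)\}$, this yields not merely the claimed inequality but the existence of the full limit; in particular the return rate is exactly polynomial of exponent $\sup\{0,\delta_\Gamma-(n-m)\}$, consistent with the subexponential behaviour announced in the introduction.

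The step I expect to be the main obstacle is precisely the coupling of scale with time: the probed radius $\rho/R$ shrinks in lockstep with the flow time $\log R$, so Birkhoff's theorem cannot be applied to a fixed-scale function, and one must work inside the scenery (CP-chain) formalism — where magnifying by the geodesic flow and refining the scale are the same operation — in order to identify the time-average of $\phi$ with the transverse decay. A second, more technical difficulty is confined to the case $\delta_\Gamma>n-m$: there the transverse marginal $\pi_U\sigma(x)$ is not locally finite (the strips already have infinite conditional mass, cf.\ Proposition \ref{prop.stripinfinite}), so the projected EFD and the observable $\phi$ must be handled through the truncated, rescaled marginals $\hat\nu_R$ rather than through $\pi_U\sigma(x)$ itself, the saturated value $n-m$ being supplied in that regime by the almost sure absolute continuity of projections (Marstrand).
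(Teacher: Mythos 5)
Your reduction is the right one and matches the paper's: split off $\log\sigma(x)(B_R)/\log R\to\delta_\Gamma$ and reduce to showing that the relative transverse mass $\sigma(x)(B_\rho^T\cap B_R)/\sigma(x)(B_R)$ decays like $R^{-\theta+\varepsilon}$ with $\theta=\inf\{n-m,\delta_\Gamma\}$ along a subsequence of scales, the input being the transverse dimension from \cite{preprintOM}, Theorem 4.1. But the mechanism you propose for this decay has a genuine gap at the step you yourself flag as the ``main obstacle'': the claim that the time-average of a one-scale observable $\phi$ ``telescopes exactly'' into $-\frac{1}{\log R}\log\hat\nu_R(B_{\rho/R}^{N/U})$ is false as stated. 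The one-scale decrement of the truncated projected measure $\nu_R=\pi_U(\sigma(x)|_{B_R})$ between transverse radii $e^{j}$ and $e^{j+1}$ is not a function of the scenery $S_{-j}^*\sigma(x)$ at time $j$, because projection along $U$ does not commute with restriction to balls of $N$: mass lands in a small transverse ball of $N/U$ from all of the annulus $B_R\setminus B_{e^j}$, so $\pi_U(\sigma(x)|_{B_{e^j}})$ and the restriction of $\nu_R$ to the corresponding ball of $N/U$ are different measures (this non-commutation is exactly what makes Proposition \ref{prop.stripinfinite} nontrivial). Hence Birkhoff's theorem cannot be applied to a fixed observable, and even inside a CP-chain formalism you would additionally need integrability of the unbounded log-ratio $\phi$ and the identification $\int\phi\,\mathrm{d}P^T=\dim P^T$, neither of which is free. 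A symptom of the overreach is your conclusion that the $\liminf$ is a genuine limit with exact polynomial exponent: the paper explicitly remarks that this is not known.

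The paper avoids all of this because the one-sided inequality only requires \emph{one} good large time, not an average over all intermediate times. Concretely: by the first recalled fact, for a.e.\ $x$ there is $\rho_0(x)>0$ with $\sigma^*(x)(B_\rho^T\cap B_1)\le\rho^{\theta-\varepsilon}$ for all $\rho\le\rho_0(x)$; choose $\rho_0$ so that $E_{\rho_0}=\{\rho_0(x)>\rho_0\}$ has positive BMS measure; by ergodicity of $a=a_t$, a.e.\ $x$ admits arbitrarily large $k$ with $a^kx\in E_{\rho_0}$; applying the estimate to $\sigma(a^kx)=S_{-k}\sigma(x)$ at the single transverse scale $\rho=e^{-k}$ and rescaling back via Lemma \ref{lemma.collect}.4 gives $\sigma^*(x)(B_1^T\cap B_{e^k})\le e^{k(\delta_\Gamma-\theta+2\varepsilon)}$ for those $k$, which is the claimed $\liminf$ bound. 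If you want to keep your framework, replace the telescoping by this single-scale recurrence argument; as it stands, the decay of $\hat\nu_R(B_{\rho/R}^{N/U})$ is asserted but not proved.
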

\begin{remark}
It is not clear whether one should expect the lower limit in this proposition to be a genuine limit.
\end{remark}

\begin{proof}
  Recall the following:
  \begin{itemize}
    \item for almost every $x$ and every fixed $R>0$,
  \[ \lim_{\rho \to 0} \frac{\log \sigma(x)(B_\rho^T \cap B_R)}{\log \rho} = \inf\{ n-m, \delta_\Gamma \} \]
\item for almost every $x$,
  \[ \lim_{R \to +\infty} \frac{\log \sigma(x)(B_R)}{\log R} = \delta_\Gamma \text. \]
\end{itemize}
The first limit comes from the fact that the projection of $\sigma(x)|B_R$ onto $N/U$ has exact dimension $\inf \{n-m,\delta_\Gamma\}$ (see \cite{preprintOM}, Theorem 4.1). The second limit holds because $\BMS$ is ergodic with respect to the automorphism $a_t$ for $t>0$ as well as for $t<0$; thus,
\[ \lim_{R \to \infty} \frac{\log \sigma(x)(B_R)}{\log R} = \lim_{r \to 0} \frac{\log \sigma(x)(B_r)}{\log r} = \delta_\Gamma \]
see \cite{thesis}, Lemme 2.2.1.

Let us denote by $\theta$ the number $\inf \{ n-m,\delta_\Gamma \}$. Fix some $\varepsilon > 0$.

  For $\BMS$-almost every $x$, there is some $\rho_0(x) > 0$ such that the relation $\rho \leq \rho_0(x)$ implies that
  \[  \sigma^*(x)(B_\rho^T \cap B_1) \leq \rho^{\theta - \varepsilon} \text. \]
  Choose $\rho_0>0$ small enough that the set $E_{\rho_0}$ of all $x$ such that $\rho_0(x)> \rho_0$ has positive BMS measure. Let $a=a_t$ for some $t>0$.

  For $\BMS$-almost every $x$, one can find arbitrarily big integers $k$ such that $a^k x \in E_{\rho_0}$ (because $\BMS$ is $a$-ergodic). If $k$ is such an integer, we have
  \[ \frac{ \sigma^*(x) (B^T_{e^k \rho} \cap B_{e^k})}{ \sigma^*(x)(B_{e^k})} \leq \rho^{\theta - \varepsilon} \]
  for any $\rho \leq \rho_0$ (Lemma \ref{lemma.collect}.4).

  Assume, furthermore, that $k$ is so large that $ \sigma^*(x)(B_{e^k}) \leq e^{k (\delta_\Gamma + \varepsilon)}$, and that $e^{-k} < \rho_0$. Letting $\rho=e^{-k}$, we get
  \[  \sigma^*(x) (B^T_1 \cap B_{e^k}) \leq e^{-k(\theta - \varepsilon)} e^{k(\delta_\Gamma + \varepsilon)} = e^{k (\delta_\Gamma - \theta + 2 \varepsilon)} \text. \]

  Since $k$ can be as large as we like, this shows that
  \[ \liminf_{k \to \infty} \frac{\log \sigma(x)(B_1^T \cap B_{e^k})}{k} \leq \sup\{0,\delta_\Gamma - (n-m)\} + 2 \varepsilon \]
  for any $\varepsilon > 0$. The lemma follows.

\end{proof}

\begin{corollary}
  Assume that $\Gamma$ is convex-cocompact and Zariski-dense. Let $m$ be an integer, $1 \leq m \leq n-1$. For any $m$-plane $U$ in $N$, and any compact $K$ in $\Gamma \backslash G$,
  \[ \liminf_{R \to \infty} \frac{\log \left( \mathrm{Haar}_U ( \{ u \in B_R\ ;\ xu \in K \}) \right)}{\log R} \leq \sup\{ 0, \delta_\Gamma - (n-m) \} \]
  for $\BMS$-almost every $x$ and also for $m_{\mathrm{BR}}$-almost every $x$.
\end{corollary}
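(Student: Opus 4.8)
The plan is to deduce this from the preceding Proposition by turning the $\mathrm{Haar}_U$-measure of the return set $\{u\in B_R\ ;\ xu\in K\}$ into the conditional mass $\sigma(x)(B^T_\rho\cap B_R)$ of a strip, so that the bound on $\liminf_R\log\sigma(x)(B^T_\rho\cap B_R)/\log R$ transfers essentially verbatim.

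First I would reduce to a canonical family of compacts. Write $\Omega_\Gamma$ for the (compact) support of $\BMS$, and for $\rho>0$ set $K_\rho=\{\,yh\ ;\ y\in\Omega_\Gamma,\ h\in N,\ |h|\le\rho\,\}$. For $\BMS$-almost every $x$ one has $x\in\Omega_\Gamma$, so the whole orbit $xU$ lies in the $N$-saturation $\Omega_\Gamma N$, whence $\{u\ ;\ xu\in K\}=\{u\ ;\ xu\in K\cap\Omega_\Gamma N\}$. Since $h=y^{-1}(yh)\in N\cap\Omega_\Gamma^{-1}K$, a compact subset of $N$, one gets $K\cap\Omega_\Gamma N\subseteq K_{\rho_K}$ for some finite $\rho_K$, hence $\{u\ ;\ xu\in K\}\subseteq\{u\ ;\ xu\in K_{\rho_K}\}$. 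It therefore suffices to bound the return rate to the thickenings $K_\rho$.

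Next comes the covering step. Recall from the proof of Lemma \ref{lemma.base} that $S(x):=\{h\in N\ ;\ xh\in\Omega_\Gamma\}$ equals $\mathrm{supp}\,\sigma(x)$. If $xu\in K_\rho$ then $x(u-h)\in\Omega_\Gamma$ for some $h\in N$ with $|h|\le\rho$, so $u$ lies within $\rho$ of $S(x)$ and the witness $w=u-h$ lies in $S(x)\cap B^T_\rho\cap B_{R+\rho}$. Choosing a maximal $\rho$-separated family $(w_j)$ in $S(x)\cap B^T_\rho\cap B_{R+\rho}$, the return set is covered by the $U$-balls $B(\pi_U(w_j),2\rho)$, so
\[ \mathrm{Haar}_U(\{u\in B_R\ ;\ xu\in K_\rho\})\le C\rho^{m}\,\#\{w_j\}. \]
The balls $B(w_j,\rho/2)$ are disjoint, lie in $B^T_{2\rho}\cap B_{R+2\rho}$, and each carries $\sigma(x)$-mass $\ge c\,\rho^{\delta_\Gamma}$ by the Ahlfors $\delta_\Gamma$-regularity of $\sigma(x)$, so $\#\{w_j\}\le C\rho^{-\delta_\Gamma}\sigma(x)(B^T_{2\rho}\cap B_{R+2\rho})$. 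Combining these, $\mathrm{Haar}_U(\{u\in B_R\ ;\ xu\in K_\rho\})\le C'\rho^{\,m-\delta_\Gamma}\sigma(x)(B^T_{2\rho}\cap B_{R+2\rho})$; as $\rho$ is fixed the constant is irrelevant to the logarithmic $\liminf$, and the preceding Proposition (with strip-width $2\rho$) gives the bound for $\BMS$-almost every $x$.

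Finally, for the $m_{\mathrm{BR}}$-almost-everywhere statement I would use translation-equivariance. Let $\Omega^*\subseteq\Omega_\Gamma$ be the full-$\BMS$ set on which the previous steps and the preceding Proposition hold; since a full-$\BMS$ set has an $N$-saturation of full BR measure, $\Omega^*N$ is a full-BR set, and each of its points is $x=yh_0$ with $y\in\Omega^*$, $h_0\in N$. As $N$ is abelian, $S(x)=S(y)-h_0$, so the very same covering now reduces $\mathrm{Haar}_U(\{u\in B_R\ ;\ xu\in K_\rho\})$ to the $\sigma(y)$-mass of a strip about the \emph{affine} plane $U+(h_0)_\perp$ inside a ball of radius $R+O(1)$; that affine strip of width $2\rho$ sits inside the linear strip $B^T_{2(|(h_0)_\perp|+\rho)}$, so the preceding Proposition applied to the BMS-generic base point $y$ again yields $\liminf_R\log\mathrm{Haar}_U(\{u\in B_R\ ;\ xu\in K\})/\log R\le\sup\{0,\delta_\Gamma-(n-m)\}$. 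I expect the main obstacle to be exactly the uniform lower Ahlfors-regularity invoked in the covering step: one must know that \emph{every} ball of the fixed radius $\rho/2$ centred on the limit set carries a definite amount of conditional mass (uniformly over the unbounded strip, not merely the almost-everywhere asymptotics of Lemma \ref{lemma.collect}.6), which is precisely where convex-cocompactness is essential.
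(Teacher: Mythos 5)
The paper gives no proof of this corollary (``we skip the straight-forward proof''), so there is nothing to compare against; judged on its own terms, your strategy --- replace $K$ by a horospherical thickening $K_\rho$ of $\Omega_\Gamma$, cover the return set by $U$-balls centred at a $\rho$-separated subset of $\mathrm{supp}\,\sigma(x)\cap B^T_\rho\cap B_{R+\rho}$, count those points via lower Ahlfors regularity of $\sigma(x)$, and feed the count into the preceding Proposition --- is the natural one and the log-liminf bookkeeping is fine. The passage to $m_{\mathrm{BR}}$-a.e.\ $x$ is also sound, though it can be shortened: since the bound for $K$ follows from the bound for any larger compact, it suffices to treat a countable exhaustion, and the resulting full-$\BMS$ set is $N$-invariant (for $h_0\in N$ one has $xh_0u=xuh_0$, so $xh_0u\in K$ iff $xu\in Kh_0^{-1}$, again a compact set), whence it has full BR measure as in Corollary~\ref{cor.principal}.

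There is, however, one assertion whose justification as written is false: $N\cap\Omega_\Gamma^{-1}K$ is \emph{not} a compact subset of $N$. Once the quotient by $\Gamma$ is unravelled, that set is exactly the set of return times $\{h\in N\ ;\ yh\in K,\ y\in\Omega_\Gamma\}$, and its unboundedness is the whole content of the paper. What you actually need --- and what is true --- is the weaker statement that every $z\in K\cap\Omega_\Gamma N$ admits \emph{some} factorization $z=yh$ with $y\in\Omega_\Gamma$ and $|h|\le\rho_K$. This requires a short separate argument: $z^-\in\Lambda_\Gamma$, the parametrization $h\mapsto (zh)^+$ of $\partial\Hr^{n+1}\setminus\{z^-\}$ by $N$ has uniformly bounded distortion as $z$ ranges over the compact set $K$, and since $\Lambda_\Gamma$ is compact with at least two points one can choose $\xi\in\Lambda_\Gamma$ with $d(\xi,z^-)\ge\mathrm{diam}(\Lambda_\Gamma)/2$; the $h$ realizing $(zh)^+=\xi$ is then uniformly bounded and $zh\in\Omega_\Gamma$. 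With that repair, and with the globally uniform lower regularity $\sigma(x)(B(w,r))\ge c\,r^{\delta_\Gamma}$ for $w\in\mathrm{supp}\,\sigma(x)$ and $r\le 1$ that you rightly single out as the key input --- it does hold for convex-cocompact $\Gamma$, by Sullivan's shadow lemma together with the cancellation at infinity between the conformal density $e^{\delta_\Gamma b}$ and the contraction of $h\mapsto(xh)^+$, and is strictly stronger than the almost-everywhere asymptotics of Lemma~\ref{lemma.collect}.6 --- your argument goes through.
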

We skip the straight-forward proof.

\bibliography{bibli}

\end{document}